\begin{document}

\title[Natural connection with totally
skew-symmetric torsion]{NATURAL CONNECTION WITH TOTALLY
SKEW-SYMMETRIC TORSION ON
ALMOST CONTACT MANIFOLDS WITH B-METRIC%
}

\author{MANCHO MANEV
}

\address{       University of Plovdiv,
                Faculty of Mathematics and Informatics,
                Department of Geometry,
                236 Bulgaria Blvd\\
                Plovdiv 4003,
                Bulgaria
} \email{mmanev@uni-plovdiv.bg}

\renewcommand\theenumi{(\roman{enumi})}
\newcommand{\f}{\varphi}
\newcommand{\n}{\nabla}
\newcommand{\lm}{\lambda}
\newcommand{\R}{\mathbb{R}}
\newcommand{\F}{\mathcal{F}}
\newcommand{\W}{\mathcal{W}}
\newcommand{\Id}{\mathrm{Id}}
\newcommand{\D}{\mathrm{d}}
\newcommand{\X}{\mathfrak{X}}
\newcommand{\sx}{\mathop{\mathfrak{S}}\limits_{x,y,z}}
\newcommand{\s}{\mathfrak{S}}
\newcommand{\norm}[1]{\left\Vert#1\right\Vert ^2}
\newcommand{\M}{(M,\allowbreak\f,\allowbreak\xi,\allowbreak\eta,g)}
\newcommand{\Lf}{(G,\allowbreak\f,\allowbreak\xi,\allowbreak\eta,g)}
\newcommand{\thmref}[1]{The\-o\-rem~\ref{#1}}
\newcommand{\propref}[1]{Pro\-po\-si\-ti\-on~\ref{#1}}
\newcommand{\lemref}[1]{Lem\-ma~\ref{#1}}
\newcommand{\nT}{\norm{T}}
\newcommand{\nf}{\norm{\n \f}}
\newcommand{\corref}[1]{Corollary~\ref{#1}}
\newcommand{\dfnref}[1]{De\-fi\-ni\-ti\-on~\ref{#1}}
\newtheorem{thm}{Theorem}
\newtheorem{lem}[thm]{Lemma}
\newtheorem{prop}[thm]{Proposition}
\newtheorem{cor}[thm]{Corollary}
\newtheorem{defn}{Definition}

\begin{abstract}
A natural connection with totally skew-symmetric torsion on almost
contact manifolds with B-metric is constructed. The class of these
manifolds, where the considered connection exists,  is determined.
Some curvature properties for this connection, when the
corresponding curvature tensor has the properties of the curvature
tensor for the Levi-Civita connection and the torsion tensor is
parallel, are obtained. %
\end{abstract}

\keywords{Almost contact manifold; B-metric; natural connection;
KT-con\-nection; parallel structure; totally skew-symmetric
torsion.}

\subjclass[2000]{53C05, 53C15, 53C50.}

\maketitle

\section{Introduction}\label{intro}

The natural connections with totally skew-symmetric torsion (also
known as \emph{K\"ahler with torsion} (shortly, \emph{KT}-)
\emph{connections} or \emph{Bismut connections}) are of particular
interest in the string theory \cite{Stro} and in the Hermitian
geometry \cite{Bis}. The KT-geometry is a natural generalization
of the K\"ahler geometry, since when the torsion is zero the
KT-connection coincides with the Levi-Civita connection.
%
The KT-structures with a closed torsion 3-form (i.e. the so-called
\emph{strong} KT-structures) on a Hermitian manifold  have been
recently studied by many authors and they have also applications
in type II string theory and in 2-dimensional supersymmetric
$\sigma$-models \cite{GHR,IvPapa,Stro}.

The connections with totally skew-symmetric torsion arise in a
natural way in theoretical and mathematical physics.  According to
\cite{Gau, Fri-Iv2}, there exists a unique KT-connection on any
Hermitian manifold. In \cite{Fri-Iv2} and \cite{Fri-Iv} all almost
contact metric, almost Hermitian and $G_2$-structures admitting a
connection with totally skew-symmetric torsion tensor are
described. On almost complex manifolds with Nor\-den metric such a
connection is introduced and investigated in \cite{Mek2, Mek6,
Mek7}.

A quaternionic analog of K\"ahler geometry is given by the
\emph{hyper-K\"ahler with torsion} (shortly, \emph{HKT}-)
geometry. An HKT-manifold is a hyper-Hermitian manifold admitting
the so-called \emph{HKT-connection} -- a hyper-Hermitian
connection with totally skew-symmetric torsion, i.e. for which the
three KT-connections associated to the three Hermitian structures
coincide. This geometry is introduced in \cite{HoPa} and later
studied for instance in \cite{GranPoon, FiGran, BarDotVerb, BarFi,
Sw}. As an analogue of the known Hermitian structure on an almost
hypercomplex manifold it is introduced and investigated an
\emph{almost hypercomplex manifold with Hermitian and
anti-Hermitian metrics} in
\cite{GriManDim12,GriMan24,Man28,ManSek18}. On these manifolds
with a metric structure, generated by a pseudo-Riemannian metric
of neutral signature, a connection of the type of the
HKT-connection is defined and investigated in \cite{ManGri32}.

The goal of the present work is a consideration of a similar
problem of existence of connections of KT-type on almost contact
manifolds with B-metric. Our hope is that the wide application of
these connections on the several manifolds mentioned above will be
extended to manifolds in this case.

In this paper we consider a natural connection $D$ (i.e.
preserving the structure) with totally skew-symmetric torsion
tensor on almost contact manifolds with B-metric. These manifolds
are the odd-dimensional extension of the almost complex manifolds
with Norden metric  and the case with indefinite metrics
corresponding to almost contact metric manifolds.

In Section~\ref{sec:1} we give some necessary facts about the
considered manifolds.

In Section~\ref{sec:2} we define a natural connection $D$ with
totaly skew-symmet\-ric torsion on an almost contact manifold with
B-metric and call it shortly a $\f$KT-connection. We determine the
class of considered manifolds where $D$ exists.

In Section~\ref{sec:2a} we characterize the corresponding
curvature tensor $K$ of $D$ and related curvature properties in
the cases when $K$ has the properties of the curvature tensor $R$
for the Levi-Civita connection $\n$ on a $\F_0$-manifold (i.e. a
manifold with a $\n$-parallel almost contact B-metric structure).
We examine also the case of $D$-parallel torsion of $D$.
In Subsections~\ref{sec:3} and \ref{sec:4} we consider separately
the horizontal and vertical components of this class and
specialize the corresponding curvature properties given in the
former part of this section.


In Section~\ref{sec:5} we construct and characterize a 
family of 5-dimensional Lie groups as $\F_7$-manifolds with a
$D$-parallel torsion of the $\f$KT-connec\-ti\-on $D$.

\section{Almost contact manifolds with B-metric}\label{sec:1}

Let $(M,\f,\xi,\eta,g)$ be an almost contact manifold with
B-metric or an \emph{almost contact B-metric manifold}, i.e. $M$
is a $(2n+1)$-dimensional differentiable manifold with an almost
contact structure $(\f,\xi,\eta)$ consisting of an endomorphism
$\f$ of the tangent bundle, a vector field $\xi$, its dual 1-form
$\eta$ as well as $M$ is equipped with a pseudo-Riemannian metric
$g$  of signature $(n,n+1)$, such that the following algebraic
relations are satisfied
\begin{equation}\label{str}
\begin{array}{c}
\f\xi = 0,\quad \f^2 = -\Id + \eta \otimes \xi,\quad
\eta\circ\f=0,\quad \eta(\xi)=1,\\%
g(\f x, \f y ) = - g(x, y ) + \eta(x)\eta(y)
\end{array}
\end{equation}
for arbitrary $x$, $y$ of the algebra $\X(M)$ on the smooth vector
fields on $M$.

Further, $x$, $y$, $z$, $w$ will stand for arbitrary elements of
$\X(M)$.

The associated metric $\tilde{g}$ of $g$ on $M$ is defined by
$\tilde{g}(x,y)=g(x,\f y)+\eta(x)\eta(y)$. Both metrics are
necessarily of signature $(n,n+1)$. The manifold
$(M,\f,\xi,\eta,\tilde{g})$ is also an almost contact B-metric
manifold.

The structural group of almost contact manifolds with B-metric is
$\allowbreak\bigl(GL(n,\mathbb{C})\cap
O(n,n)\bigr)\allowbreak\times I_1$, i.e. it consists of real
square matrices of order $2n+1$ of the following type
\[
\left(%
\begin{array}{r|c|c}
  A & B & \vartheta^T\\ \hline
  -B & A & \vartheta^T\\ \hline
  \vartheta & \vartheta & 1 \\
\end{array}%
\right),\qquad %
\begin{array}{l}
  AA^T-BB^T=I_n,\\%
  AB^T+BA^T=O_n,
\end{array}%
\quad A, B\in GL(n;\mathbb{R}),
\]
where $\vartheta$ and its transpose $\vartheta^T$ are the zero row
$n$-vector and the zero column $n$-vector; $I_n$ and $O_n$ are the
unit matrix and the zero matrix of size $n$, respectively.

A classification of the almost contact manifolds with B-metric is
given in \cite{GaMiGr}. This classification is made with respect
to the tensor field $F$ of type (0,3) defined by
\begin{equation}\label{1.2}
F(x,y,z)=g\bigl( \left( \nabla_x \f \right)y,z\bigr),
\end{equation}
where $\nabla$ is the Levi-Civita connection of $g$. The tensor
$F$ has the following properties
\begin{equation}\label{1.3}
F(x,y,z)=F(x,z,y)=F(x,\f y,\f
z)+\eta(y)F(x,\xi,z)+\eta(z)F(x,y,\xi).
\end{equation}

This classification includes eleven basic classes $\F_1$, $\F_2$,
$\dots$, $\F_{11}$. In this work we pay attention to $\F_3$ and
$\F_7$. These basic classes are characterized by the conditions
\begin{gather}
\F_3:\quad \sx F(x,y,z)=0,\quad
F(\xi,y,z)=F(x,y,\xi)=0;\label{1.4}
\\%
\F_7:\quad \sx F(x,y,z)=0,\quad F(x,y,z)=-F(\f x,\f y,z)-F(\f
x,y,\f z),\label{1.4a}
\end{gather}
where $\s$ denotes the cyclic sum over three arguments.

The special class $\F_0$
, belonging to any other class $\F_i$ $(i=1,2,\dots,11)$, is
determined by the condition $F(x,y,z)=0$. Hence $\F_0$ is the
class of almost contact B-metric manifolds with $\n$-parallel
structures, i.e. $\n\f=\n\xi=\n\eta=\n g=\n \tilde{g}=0$.

The components of the inverse matrix of $g$ are denoted by
$g^{ij}$ with respect to the basis $\{e_i\}$ of the tangent space
$T_pM$ of $M$ at an arbitrary point $p\in M$.

By analogy with the square norm of $\nabla J$ for an almost
complex structure $J$, we define the \emph{square norm of $\nabla
\f$} by
\begin{equation}\label{snf}
    \norm{\nabla \f}=g^{ij}g^{ks}
    g\bigl(\left(\nabla_{e_i} \f\right)e_k,\left(\nabla_{e_j}
    \f\right)e_s\bigr).
\end{equation}

It is clear, the equality $\norm{\nabla \f}=0$ is valid if $\M$ is
a $\F_0$-manifold, but the inverse implication is not always true.
An almost contact B-metric manifold having a zero square norm of
$\n\f$ we call an \emph{isotropic-$\F_0$-manifold}.

As it is known \cite{KoNo}, an arbitrary  vector field $\xi$ is
\emph{Killing} if the Lie differential of the metric $g$ with
respect to $\xi$ is zero. Then, the vector field $\xi$ is Killing
on $(M,\f,\xi,\eta,g)$ if and only if
\begin{equation}\label{kil}
\left(\n_x \eta\right)y+\left(\n_y \eta\right)x=0.
\end{equation}

\begin{lem}\label{37}
The class $\F_3\oplus\F_7$ is the class of almost contact B-metric
manifolds $(M,\f,\xi,\eta,\allowbreak g)$ with a Killing vector
field $\xi$ and a zero cyclic sum $\s$ of $F$.
\end{lem}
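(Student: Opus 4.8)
The plan is to translate the Killing condition \eqref{kil} into the language of the structural tensor $F$ and then match the resulting restrictions against the defining relations \eqref{1.4} and \eqref{1.4a}. First I would differentiate $\f\xi=0$ to obtain $(\n_x\f)\xi=-\f\n_x\xi$ and, applying $\f$ once more together with $\eta(\n_x\xi)=0$ (which holds because $g(\xi,\xi)=1$ by \eqref{str}), deduce $\n_x\xi=\f(\n_x\f)\xi$. Using the purity $g(\f x,y)=g(x,\f y)$ of the B-metric (a consequence of \eqref{str}), this gives $(\n_x\eta)y=g(\n_x\xi,y)=F(x,\xi,\f y)$, so that \eqref{kil} becomes $F(x,\xi,\f y)+F(y,\xi,\f x)=0$. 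Replacing $y$ by $\f y$ and using $F(x,\xi,\xi)=0$ (itself a consequence of \eqref{1.3}), I would record the Killing condition in the working form $F(x,\xi,y)=F(\f y,\xi,\f x)$.

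For the inclusion $\F_3\oplus\F_7\subseteq\{\xi\text{ Killing}\}\cap\{\sx F=0\}$ only the Killing property needs checking, since $\sx F=0$ is built into both \eqref{1.4} and \eqref{1.4a}. On $\F_3$ the relation $F(x,y,\xi)=0$ forces $F(x,\xi,y)=0$, so the Killing identity holds trivially. On $\F_7$, putting $x=\xi$ in \eqref{1.4a} yields $F(\xi,y,z)=0$; specializing $\sx F=0$ to one $\xi$-argument then gives the antisymmetry $F(x,\xi,y)=-F(y,\xi,x)$, while setting $y=\xi$ in \eqref{1.4a} gives $F(x,\xi,y)=-F(\f x,\xi,\f y)$. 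Combining these two relations reproduces exactly the working form of the Killing condition, so every $\F_7$-manifold has $\xi$ Killing.

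The substantial direction is the converse. Assuming $\sx F=0$ and $\xi$ Killing, I would first isolate the totally horizontal part of $F$ (obtained by replacing each argument $x$ by $x-\eta(x)\xi=-\f^2x$): it again satisfies $\sx=0$ and vanishes whenever an argument is $\xi$, hence lies in $\F_3$ by \eqref{1.4}. For the remaining $\xi$-components I would specialize $\sx F=0$ to a single $\xi$-slot, which expresses $F(\xi,y,z)$ through the symmetric part of $G(x,y):=F(x,\xi,y)$, and then feed in the Killing relation $F(x,\xi,y)=F(\f y,\xi,\f x)$ to pin down the symmetry type of $G$ and of $F(\xi,\cdot,\cdot)$. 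The aim is to show that the surviving $\xi$-part is antisymmetric and $\f$-anti-invariant, i.e. obeys \eqref{1.4a}, so that it lies in $\F_7$; together with the horizontal piece this would give $F\in\F_3\oplus\F_7$.

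The main obstacle is precisely this last bookkeeping. One has to decompose the $\xi$-contraction $G$ into its symmetric/antisymmetric and $\f$-invariant/$\f$-anti-invariant pieces and track how each constraint acts: $\sx F=0$ ties $F(\xi,\cdot,\cdot)$ to the symmetric part of $G$, whereas Killing, in the form $F(x,\xi,y)=F(\f y,\xi,\f x)$, only constrains the behaviour of $G$ under $\f$. The delicate point is to show that these two conditions together force the symmetric, $\f$-invariant $\xi$-components to drop out (so that $F(\xi,\cdot,\cdot)$ vanishes and the symmetric part of $\n\xi$ disappears), leaving exactly the $\F_7$-type antisymmetric $\f$-anti-invariant part; recovering the full relation \eqref{1.4a} for this remainder is where the argument must be made with care.
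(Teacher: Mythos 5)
Your reduction of the Killing condition to the identity $F(x,\xi,\f y)+F(y,\xi,\f x)=0$, and then to the working form $F(x,\xi,y)=F(\f y,\xi,\f x)$, is correct, and your forward direction is complete: $\sx F=0$ is explicit in both \eqref{1.4} and \eqref{1.4a}, the $\F_3$ case is trivial because $F(x,\xi,y)=0$ there, and the three identities you extract from \eqref{1.4a} (namely $F(\xi,y,z)=0$, the antisymmetry of $F(x,\xi,y)$ obtained from the cyclic sum, and $F(x,\xi,y)=-F(\f x,\xi,\f y)$) do combine to give the Killing identity. Since the paper's own proof is the single sentence that \eqref{kil} and direct calculations yield the statement, there is no argument there to compare against; the substance has to be supplied, and that is exactly what is missing from your proposal.

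The converse is where the whole content of the lemma lives, and you have only described a plan and then flagged its decisive step as ``the main obstacle''; that step is a genuine gap, not bookkeeping. Concretely, put $G(x,z)=F(x,\xi,z)$. The cyclic condition with one argument equal to $\xi$ gives only $F(\xi,y,z)=-G(y,z)-G(z,y)$, i.e.\ it ties $F(\xi,\cdot,\cdot)$ to the symmetric part of $G$ without killing either, while the Killing condition in your working form gives only $G(\f y,\f x)=G(x,y)$, which removes the symmetric $\f$-anti-invariant and antisymmetric $\f$-invariant pieces of $G$ but leaves the symmetric $\f$-invariant piece untouched. Indeed, for any symmetric horizontal bilinear form $G$ with $G(\f x,\f y)=G(x,y)$ (extended by zero on $\xi$), the tensor $F(x,y,z)=-2\eta(x)G(y,z)+\eta(y)G(x,z)+\eta(z)G(x,y)$ satisfies \eqref{1.3}, has $\sx F=0$ and satisfies $F(x,\xi,\f y)+F(y,\xi,\f x)=0$, yet has $F(\xi,y,z)=-2G(y,z)\neq0$, which is incompatible with \eqref{1.4} and with the consequence $F(\xi,y,z)=0$ of \eqref{1.4a}. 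So the decomposition you propose cannot be completed using \eqref{1.4}--\eqref{1.4a} and the two hypotheses alone: one must invoke the explicit Ganchev--Mihova--Gribachev definitions of the remaining basic classes (in particular how the $F(\xi,\cdot,\cdot)$-component and the symmetric $\f$-invariant part of $F(\cdot,\xi,\cdot)$ are split between $\F_{10}$ and $\F_8$) and show that those projections vanish --- precisely the verification your outline leaves open.
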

\begin{proof}
Equation \eqref{kil} and direct calculations yield the statement.
\end{proof}

The Nijenhuis tensor $N$ of the structure is defined by $N := [\f,
\f]+ \D\eta\otimes\xi$, where $[\f, \f](x, y )=\left[\f x,\f y
\right]+\f^2\left[x,y \right]-\f\left[\f x,y \right]-\f\left[x,\f
y \right]$. Hence, $N$ in terms of the covariant derivatives has
the form
\begin{equation}\label{N}
\begin{split}
N(x,y)=&\left(\n_{\f x}\f\right)y-\left(\n_{\f
y}\f\right)x-\f\left(\n_{x}\f\right)y
+\f\left(\n_{y}\f\right)x\\%
&+\left(\n_{x}\eta\right)y.\xi-\left(\n_{y}\eta\right)x.\xi.
\end{split}
\end{equation}
The Nijenhuis (0,3)-tensor is determined by
$N(x,y,z)=g\left(N(x,y),z\right)$. Then from \eqref{N} and
\eqref{1.2} we have
\begin{equation}\label{NF}
\begin{split}
N(x,y,z)=&F(\f x,y,z)-F(\f y,x,z)-F(x,y,\f z)+F(y,x,\f
z)\\%
&+F(x,\f y,\xi)\eta(z)-F(y,\f x,\xi)\eta(z).
\end{split}
\end{equation}

In \cite{Man} there are considered the linear projectors $h$ and
$v$ over $T_pM$ which split (orthogonally and invariantly with
respect to the structural group) any vector $x$ into a horizontal
component $h(x)=-\f^2x$ and a vertical component
$v(x)=\eta(x)\xi$.
The decomposition $T_pM=h(T_pM)\oplus v(T_pM)$ generates the
corresponding distribution of the basic tensors $F$, which gives
the horizontal component $\F_3$ and the vertical component $\F_7$
of the class $\F_3\oplus\F_7$.

Moreover, for the considered classes we have the following
\begin{lem}
The Nijenhuis tensor $N$ on an almost contact B-metric manifold
$\M$ has the corresponding form:
\begin{enumerate}[(i)]
    \item If $\M\in\F_3\oplus\F_7$ then
\[
\begin{array}{l}
    N(x,y)=2\left(\n_{\f x} \f\right)y-2\f\left(\n_{x}
    \f\right)y+ 2\left(\n_{x} \eta\right)y.\xi,\\%
    h(N(x,y))=-2\f^2\left(\n_{\f x} \f\right)y-2\f\left(\n_{x}
    \f\right)y,\\%
    v(N(x,y))=4\left(\n_{x} \eta\right)y.\xi;
\end{array}
\]
    \item If $\M\in\F_3$, then
\[
N(x,y)=h(N(x,y))=2\left(\n_{\f x} \f\right)y-2\f\left(\n_{x}
    \f\right)y;
\]
    \item If $\M\in\F_7$, then
\[
N(x,y)=v(N(x,y))=4\left(\n_{x} \eta\right)y.\xi.
\]
\end{enumerate}
\end{lem}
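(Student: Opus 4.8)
The plan is to start from the covariant expression \eqref{N} for $N$ and observe that it has the form $N(x,y)=A(x,y)-A(y,x)$, where $A(x,y):=\left(\n_{\f x}\f\right)y-\f\left(\n_{x}\f\right)y+\left(\n_{x}\eta\right)y\cdot\xi$. Thus all three identities in (i) amount to the single statement that $A$ is itself skew-symmetric on $\F_3\oplus\F_7$, since then $N(x,y)=2A(x,y)$ and the horizontal and vertical pieces of $2A$ are read off directly. So the core of the argument is to prove $A(x,y)=-A(y,x)$.

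First I would dispose of the vertical part. Its contribution to $A$ is $\left(\n_x\eta\right)y\cdot\xi$, and its skew-symmetry is exactly the Killing condition \eqref{kil}, which holds on $\F_3\oplus\F_7$ by \lemref{37}; this already collapses the two $\eta$-terms of \eqref{N} into $2\left(\n_x\eta\right)y\cdot\xi$. Passing to the $(0,3)$-form via \eqref{NF}, skew-symmetry of the horizontal part of $A$ is equivalent to
\begin{equation*}
F(\f x,y,z)+F(\f y,x,z)=F(x,y,\f z)+F(y,x,\f z).
\end{equation*}
I would prove this on each summand and conclude by linearity. On $\F_3$ the conditions \eqref{1.4} make every $F$ with a $\xi$-slot vanish, so \eqref{1.3} reduces to $F(x,\f y,\f z)=F(x,y,z)$, whence $F(x,\f y,z)=-F(x,y,\f z)$; feeding this together with $\sx F(x,y,z)=0$ into the left-hand side moves the $\f$ from the first into the third slot and yields exactly the right-hand side. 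On $\F_7$ the defining relation \eqref{1.4a} gives $F(\xi,y,z)=0$ and, combined with \eqref{1.3}, forces $F$ to vanish on horizontal triples; this produces $F(x,y,\f z)=F(\f x,y,z)-\eta(z)F(\f x,y,\xi)$, and the leftover $\xi$-terms cancel by Killing, again giving the identity.

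Next I would compute the projections. Using $h=-\f^2$ together with $\f^3=-\f$ and $\f^2\xi=0$, the operator $-\f^2$ sends $2\left(\n_{\f x}\f\right)y$ to $-2\f^2\left(\n_{\f x}\f\right)y$, sends $-2\f\left(\n_x\f\right)y$ to $-2\f\left(\n_x\f\right)y$, and annihilates the $\xi$-term, producing the stated $h(N(x,y))$. For $v=\eta\otimes\xi$ I would use $\eta\circ\f=0$ and the auxiliary symmetry $F(\f x,y,\xi)=F(x,\f y,\xi)=\left(\n_x\eta\right)y$ (obtained from the $\F_7$ relation \eqref{1.4a}, trivial on $\F_3$); then $\eta\bigl(N(x,y)\bigr)=2F(\f x,y,\xi)+2\left(\n_x\eta\right)y=4\left(\n_x\eta\right)y$, giving $v(N(x,y))=4\left(\n_x\eta\right)y\cdot\xi$. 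Parts (ii) and (iii) are then immediate specializations: on $\F_3$ all $\xi$-contractions vanish, so $v(N)=0$ and $N=h(N)$; on $\F_7$ the vanishing of $F$ on horizontal triples makes $h(N)=0$, so $N=v(N)$.

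I expect the main obstacle to be the horizontal identity above, i.e.\ the skew-symmetry of the horizontal part of $A$. Unlike the vertical part, it does not follow from the Killing property alone and genuinely uses the full class-specific structure: the relation $F(x,\f y,\f z)=F(x,y,z)$ on $\F_3$ and the vanishing of $F$ on horizontal triples on $\F_7$, the latter requiring one to combine the defining relation \eqref{1.4a} with the universal property \eqref{1.3}. Once these two facts are secured, the cyclic condition $\sx F(x,y,z)=0$ and \lemref{37} handle the remaining bookkeeping.
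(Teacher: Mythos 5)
Your proposal is correct and follows essentially the same route as the paper: reduce everything to the $(0,3)$-expression \eqref{NF}, use the Killing condition \eqref{kil} for the vertical part and the class-defining identities \eqref{1.4}, \eqref{1.4a} together with \eqref{1.3} for the rest, then read off $h(N)$ and $v(N)$. The only difference is one of detail -- the paper dispatches (i) with ``follows from \eqref{NF} by virtue of \eqref{1.2}--\eqref{kil}'' and obtains $h(N)$ as $N-v(N)$ rather than as $-\f^2 N$, whereas you isolate and verify the key identity $F(\f x,y,z)+F(\f y,x,z)=F(x,y,\f z)+F(y,x,\f z)$ explicitly on each summand, which is precisely the computation the paper leaves implicit.
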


\begin{proof}
The expression of $N$ in (i) follows from \eqref{NF} by virtue of
equalities \eqref{1.2}--\eqref{kil}. Hence,
$v(N(x,y))=N(x,y,\xi)\xi=2F(\f x,y,\xi)\xi+2\left(\n_{x}
\eta\right)y.\xi=4\left(\n_{x} \eta\right)y.\xi$, using that $\xi$
is a Kill\-ing vector field. After that, since
$h(N(x,y))\allowbreak=N(x,y)-v(N(x,y))$, we obtain the
corresponding expression of $h(N(x,y))$.

In case (ii), according to \eqref{1.4}, we have $F(x,y,\xi)=0$ and
therefore $\left(\n_{x} \eta\right)y=0$. Then the form of $N$ in
(i) implies $N(x,y)=2\left(\n_{\f x} \f\right)y-2\f\left(\n_{x}
    \f\right)y$. In addition to that, since $v(N(x,y))=0$, then
    $h(N(x,y))=N(x,y)$ holds.

In case (iii), we have $F(x,y,z)=-F(\f x,\f y,z)-F(\f x,y,\f z)$
from \eqref{1.4a}. According to properties \eqref{1.3}, we get
$F(x,y,z)=\eta(y)F(x,\xi,z)+\eta(z)F(x,y,\xi)$ and by \eqref{1.2}
the relation  $\left(\n_{x}
\f\right)y=-\eta(y)\f\n_x\xi-\left(\n_{x} \eta\right)\f y.\xi$
holds. Applying the latter equality in the expression of $N$ in
(i), we obtain that $N(x,y)=4\left(\n_{x} \eta\right)y.\xi$. Then
the
equality $N(x,y)=v(N(x,y))$ follows immediately.%
\end{proof}

Let $R=\left[\n,\n\right]-\n_{[\ ,\ ]}$ be the curvature tensor of
type (1,3) of $\nabla$. We denote the curvature tensor of type
$(0,4)$  $R(x,y,z,w)$ $=g(R(x,y)z,w)$ by the same letter.

The Ricci tensor $\rho$ for the curvature tensor $R$ and the
scalar curvature $\tau$ for $R$ are defined by the traces
$\rho(y,z)=g^{ij}R(e_i,y,z,e_j)$ and $\tau=g^{ij}\rho(e_i,e_j)$,
respectively.

Similarly, the Ricci tensor and the scalar curvature are
determined for each \emph{curvatu\-re-like tensor} $L$, i.e.
 for the (0,4)-tensor $L$ with the following properties:
\begin{gather}
    L(x,y,z,w)=-L(y,x,z,w)=-L(x,y,w,z),\label{1.10}
    \\%
    \sx L(x,y,z,w)=0 \qquad \text{(the first Bianchi
    identity)}.\label{1.10B}
\end{gather}

A curvature-like tensor we call a \emph{$\f$-K\"ahler-type tensor}
on $\M$ if the following identity is valid
\begin{equation}\label{1.12}
    L(x,y,\f z,\f w)=-L(x,y,z,w).
\end{equation}
This property is characteristic of the curvature tensor $R$ on a
$\F_0$-manifold. Moreover, \eqref{1.12} is similar to the
corresponding property for a K\"ahler-type tensor with respect to
$J$ on an almost complex manifold with Norden metric.


\section{$\f$KT-connection}\label{sec:2}


\begin{defn}
A linear connection $D$ is called a \emph{natural connection} on
the manifold  $(M,\f,\allowbreak\xi,\eta,g)$ if the almost contact
structure $(\f,\xi,\eta)$ and the B-metric $g$ are parallel with
respect to $D$, i.e. $D\f=D\xi=D\eta=Dg=0$.
\end{defn}

If $T$ is the torsion tensor of $D$, i.e. $T(x,y)=D_x y-D_y x-[x,
y]$, then the corresponding tensor field of type (0,3) is
determined by $T(x,y,z)=g(T(x,y),z)$.

Let $\n$ be the Levi-Civita connection generated by $g$. Then we
denote
\begin{equation}\label{1}
D_xy=\n_xy+Q(x,y).
\end{equation}
Furthermore, we use the denotation
$Q(x,y,z)=g\left(Q(x,y),z\right)$.
\begin{thm}\label{thm-KT}
The linear connection $D$ is a natural connection on the manifold
$(M,\f,\xi,\allowbreak\eta,g)$ if and only if %
\begin{gather}
 Q(x,y,\f z)-Q(x,\f y,z)=F(x,y,z),\label{1a}
 \\%
 Q(x,y,z)=-Q(x,z,y).\label{1b}
\end{gather}
\end{thm}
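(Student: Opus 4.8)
The plan is to translate each of the four defining conditions $D\f=D\xi=D\eta=Dg=0$ into a statement about $Q$ via the substitution \eqref{1}, and then to show that \eqref{1a} and \eqref{1b} already encode all of them. Two preliminary facts drawn from \eqref{str} will be used throughout. First, putting $y=\xi$ in the last relation of \eqref{str} and using $\f\xi=0$, $\eta(\xi)=1$ gives $\eta(x)=g(x,\xi)$, so $\eta$ is the $g$-dual of $\xi$. Second, replacing $y$ by $\f y$ in $g(\f x,\f y)=-g(x,y)+\eta(x)\eta(y)$ and using $\f^2=-\Id+\eta\otimes\xi$ together with $\eta\circ\f=0$ shows that $\f$ is $g$-self-adjoint, i.e. $g(\f x,y)=g(x,\f y)$. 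This self-adjointness, characteristic of the B-metric case, is precisely what will link the terms $Q(x,\f y,z)$ and $Q(x,y,\f z)$.

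First I would compute $Dg$. Since $\n g=0$, the substitution \eqref{1} gives $(D_xg)(y,z)=-Q(x,y,z)-Q(x,z,y)$, so $Dg=0$ is equivalent to \eqref{1b}. Next I would compute $D\f$: again from \eqref{1}, $(D_x\f)y=(\n_x\f)y+Q(x,\f y)-\f Q(x,y)$. Pairing with $z$ through $g$, using \eqref{1.2} and the self-adjointness of $\f$ to rewrite $g(\f Q(x,y),z)=Q(x,y,\f z)$ and $g(Q(x,\f y),z)=Q(x,\f y,z)$, yields $g\bigl((D_x\f)y,z\bigr)=F(x,y,z)-\bigl[Q(x,y,\f z)-Q(x,\f y,z)\bigr]$. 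By nondegeneracy of $g$, the condition $D\f=0$ is therefore equivalent to \eqref{1a}. This already settles the forward implication, and one half of the converse.

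The step I expect to carry the actual content is showing that \eqref{1a} and \eqref{1b}, equivalently $D\f=Dg=0$, force the remaining two conditions $D\xi=0$ and $D\eta=0$, so that $D$ is genuinely natural. Using $\eta(y)=g(y,\xi)$ together with $Dg=0$, a short computation gives $(D_x\eta)y=g(D_x\xi,y)$, whence $D\eta=0$ and $D\xi=0$ are equivalent and it suffices to prove $D\xi=0$. For this I would differentiate the identity $\f^2=-\Id+\eta\otimes\xi$ with $D$: since $D\f=0$ implies $D(\f^2)=0$, while the right-hand side gives $(D_x\f^2)y=(D_x\eta)y\,\xi+\eta(y)D_x\xi$, I obtain $g(D_x\xi,y)\,\xi+\eta(y)D_x\xi=0$. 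Finally, $g(\xi,\xi)=\eta(\xi)=1$ and $Dg=0$ force $g(D_x\xi,\xi)=0$; substituting $y=\xi$ into the previous relation then leaves $D_x\xi=0$. Hence $D\xi=D\eta=0$, which completes the equivalence.
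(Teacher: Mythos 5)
Your proof is correct, and for the two main equivalences it follows the same route as the paper: $Dg=0\Leftrightarrow$ \eqref{1b} via $(D_xg)(y,z)=-Q(x,y,z)-Q(x,z,y)$, and $D\f=0\Leftrightarrow$ \eqref{1a} by expanding $g\bigl((D_x\f)y,z\bigr)$ (you make explicit the $g$-self-adjointness $g(\f x,y)=g(x,\f y)$, which the paper uses tacitly when it pairs $D_xy$ with $\f z$). The one place where you genuinely diverge is the derivation of $D\xi=0$ and $D\eta=0$ from \eqref{1a}--\eqref{1b}. The paper gets $D\xi=0$ from the identity $g(\n_x\xi,z)=F(x,\xi,\f z)$ together with the $y=\xi$ specialization of \eqref{1a} (and, implicitly, $Q(x,\xi,\xi)=0$ from \eqref{1b}), then observes that $D\eta=0$ follows since $\eta=g(\cdot,\xi)$. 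You instead apply $D$ to the structural identity $\f^2=-\Id+\eta\otimes\xi$, which under $D\f=0$ yields $g(D_x\xi,y)\,\xi+\eta(y)D_x\xi=0$, and then kill the first term at $y=\xi$ using $g(D_x\xi,\xi)=\tfrac12 x\,g(\xi,\xi)=0$. Your version is slightly more self-contained, since it does not invoke the unproved identity $g(\n_x\xi,z)=F(x,\xi,\f z)$ and works purely from the algebraic relations \eqref{str}; the paper's version has the small advantage of exhibiting $D\xi=0$ as a direct consequence of \eqref{1a} in the form $F(x,\xi,\f z)+Q(x,\xi,z)=0$. Both arguments are sound.
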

\begin{proof}
From \eqref{1} we have
\begin{gather}
g(D_x \f y,z)=g(\n_x \f y,z)+ Q(x,\f y,z),  \nonumber
\\%
g(D_x y,\f z)=g(\n_x y,\f z)+ Q(x,y,\f z).\nonumber
\end{gather}
Subtracting the last two equations we immediately obtain
$$
g\bigl(\left(D_x \f \right) y, z\bigr)=F(x, y, z)+ Q(x,\f y,z)-
Q(x,y,\f z).
$$
Then $D\f=0$ is equivalent to \eqref{1a}.

After that we get consecutively
$$
\left(D_x g \right) (y, z)=x g(y, z)- g(D_x y,z)- g(y,D_x z)
=-Q(x,y,z)-Q(x,z,y).
$$
Therefore, $Dg=0$ is valid if and only if \eqref{1b} holds.

Equality \eqref{1} implies $g(D_x\xi,z)=g(\n_x\xi,z)+Q(x,\xi,z)$.
Because of the identity
\[
g(\n_x\xi,z)\allowbreak=F(x,\xi,\f z),
\]
we have that $D\xi=0$ is equivalent to the relation $F(x,\xi,\f
z)+Q(x,\xi,z)=0$. The last equality is a consequence of
\eqref{1a}.

Since $\eta(\cdot)=g(\cdot,\xi)$, then supposing $Dg=0$ we have
$D\xi=0$ if and only if $D\eta=0$.
\end{proof}

\begin{defn}
A natural connection $D$ is called a \emph{$\f$KT-connection} on
the manifold $(M,\f,\xi,\allowbreak\eta,g)$ if the torsion tensor
$T$ of $D$ is totally skew-symmetric, i.e. a 3-form.
\end{defn}

For a $\f$KT-connection $D$ with torsion tensor $T$ and tensor of
deformation $Q$ from $\n$ to $D$ determined by \eqref{1} we have
\begin{equation}\label{TQ}
T(x,y,z)=2Q(x,y,z).
\end{equation}
Therefore, $Q$ is also a 3-form.

\begin{lem}\label{lem-NT}
Let $D$ be a $\f$KT-connection on $(M,\f,\xi,\eta,g)$. Then the
torsion $T$ of $D$ and the Nijenhuis tensor $N$ on $\M$ have the
following relation
\[
N(x,y,z)=T(x,y,z)+T(x,\f y,\f z)+T(\f x,y,\f z)-T(\f x,\f y,z). %
\]
\end{lem}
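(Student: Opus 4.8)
The plan is to start from the expression \eqref{NF} of the Nijenhuis $(0,3)$-tensor in terms of $F$ and to rewrite everything through the torsion $T$. The bridge is \eqref{1a} together with \eqref{TQ}: since $Q=\tfrac12 T$, formula \eqref{1a} reads
\[
F(x,y,z)=Q(x,y,\f z)-Q(x,\f y,z)=\tfrac12\bigl(T(x,y,\f z)-T(x,\f y,z)\bigr).
\]
First I would substitute this identity into each of the six $F$-terms appearing in \eqref{NF}, so that $N(x,y,z)$ becomes a linear combination (with coefficients $\pm\tfrac12$) of torsion terms whose three slots carry $\f$ in various positions, including the doubly applied $\f$ arising from the third arguments $\f z$.

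Next I would reduce the slots where $\f$ occurs twice by means of \eqref{str}, using $\f^2 z=-z+\eta(z)\xi$ and $\f\xi=0$. This produces, besides the ``clean'' terms $T(x,y,z)$, $T(x,\f y,\f z)$, $T(\f x,y,\f z)$ and $T(\f x,\f y,z)$ that I am aiming for, a collection of residual terms carrying a factor $\eta(z)$ and a torsion evaluated with $\xi$ in one slot. The crucial simplification is that $T$ is a $3$-form: total skew-symmetry kills $T(\cdot,\xi,\xi)$-type contributions outright and, more importantly, forces the remaining $\eta(z)T(\cdot,\cdot,\xi)$ terms to cancel in pairs, so that no vertical contribution survives.

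Finally I would invoke the total antisymmetry of $T$ once more to identify terms that differ only by a transposition of two slots; for instance $T(\f y,x,\f z)=-T(x,\f y,\f z)$ and $T(y,x,z)=-T(x,y,z)$. Each of the four surviving types then occurs twice with the same sign, and the two copies combine to absorb the global factor $\tfrac12$, yielding exactly
\[
N(x,y,z)=T(x,y,z)+T(x,\f y,\f z)+T(\f x,y,\f z)-T(\f x,\f y,z).
\]
The computation is entirely mechanical; the only point requiring care --- and the step I would check most carefully --- is the cancellation of all the $\eta(z)$-terms, since an arithmetic slip there would leave a spurious vertical summand.
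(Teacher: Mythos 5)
Your proposal is correct and follows exactly the route the paper indicates: substitute $F(x,y,z)=\tfrac12\bigl(T(x,y,\f z)-T(x,\f y,z)\bigr)$ (from \eqref{1a} and \eqref{TQ}) into \eqref{NF} and simplify using $\f^2=-\Id+\eta\otimes\xi$, $\f\xi=0$ and the total skew-symmetry of $T$; the $\eta(z)$-terms do cancel as you anticipate. The paper's proof is just a one-line citation of these same three ingredients, so your argument is the fully worked-out version of it.
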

\begin{proof}
The statement follows from \eqref{NF}, \thmref{thm-KT} and
\eqref{TQ}.
\end{proof}

\begin{prop}
\label{prop-exD-37} If a $\f$KT-connection exists on $\M$ then
$\xi$ is a Killing vector field and $\s F=0$, i.e.
$(M,\f,\xi,\eta,g)$ belongs to the class $\F_3\oplus\F_7$.
\end{prop}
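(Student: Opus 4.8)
The plan is to invoke \lemref{37}, which reduces the claim to establishing two facts: that $\xi$ is a Killing vector field (that is, equation \eqref{kil}) and that the cyclic sum $\s F$ vanishes. Both facts I expect to extract purely from the total skew-symmetry of the deformation tensor $Q$ — which holds because $T=2Q$ by \eqref{TQ} and $T$ is a $3$-form — together with the two characterizing identities \eqref{1a} and \eqref{1b} of \thmref{thm-KT}.

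For the vanishing of $\s F$, I would start from \eqref{1a} and write each of the three cyclically permuted terms $F(x,y,z)$, $F(y,z,x)$, $F(z,x,y)$ as a difference $Q(\cdot,\cdot,\f\cdot)-Q(\cdot,\f\cdot,\cdot)$. Summing produces two groups of three terms: one carrying $\f$ in the third argument and one carrying $\f$ in the second. Since $Q$ is an alternating $3$-form, a cyclic permutation of its arguments (an even permutation) leaves it unchanged, so each term of the second group, for instance $Q(x,\f y,z)$, can be rewritten with $\f$ in the third slot, here as $Q(z,x,\f y)$. The second group then coincides termwise with the first, so the two groups cancel and $\sx F(x,y,z)=0$.

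For the Killing property, I would first recall the identity $g(\n_x\xi,z)=F(x,\f z,\xi)$ already used in the proof of \thmref{thm-KT}, which comes from $F(x,\xi,z)=g\bigl((\n_x\f)\xi,z\bigr)$ together with $\f\xi=0$ and the symmetry of $\f$ with respect to the B-metric $g$. Since $\eta(\cdot)=g(\cdot,\xi)$ gives $(\n_x\eta)y=g(\n_x\xi,y)=F(x,\f y,\xi)$, the condition $D\xi=0$ — equivalently $F(x,\xi,\f z)+Q(x,\xi,z)=0$ as extracted in \thmref{thm-KT} — yields $(\n_x\eta)y=-Q(x,\xi,y)$. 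Then the left-hand side of \eqref{kil} equals $-Q(x,\xi,y)-Q(y,\xi,x)$, and this vanishes because transposing the first and third arguments of the $3$-form $Q$ reverses its sign, i.e.\ $Q(y,\xi,x)=-Q(x,\xi,y)$. Hence $\xi$ is Killing, and \lemref{37} then places $\M$ in $\F_3\oplus\F_7$.

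The computations are short; the only place demanding care is the permutation bookkeeping for the alternating tensor $Q$ — correctly collapsing the mixed $\f$-terms into a single cyclic pattern in the $\s F$ step, and correctly identifying the transposition needed in the Killing step. I expect no genuine obstacle beyond this routine sign tracking.
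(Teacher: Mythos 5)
Your proof is correct and follows essentially the same route as the paper: the vanishing of $\s F$ comes from the cyclic sum of \eqref{1a} together with the total skew-symmetry of $Q=\frac{1}{2}T$, and the Killing property of $\xi$ comes from identifying $\left(\n_x \eta\right)y$ with a component of the $3$-form (you phrase it as $-Q(x,\xi,y)$, the paper as $\frac{1}{2}T(x,y,\xi)$, which is the same identity) and then using skew-symmetry. The only cosmetic difference is that you reach that identity via the $D\xi=0$ relation from \thmref{thm-KT} rather than by substituting $\f y$ and $\xi$ directly into \eqref{1a}.
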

\begin{proof}
Let us assume that a $\f$KT-connection $D$ exists. Then, according
to \thmref{thm-KT}, the conditions \eqref{1a} and \eqref{1b} are
valid for $Q=\frac{1}{2}T$. Taking the cyclic sum in \eqref{1a},
because of \eqref{TQ}, we obtain
\[
\begin{split}
&F(x,y,z)+F(y,z,x)+F(z,x,y)\\%
&=\frac{1}{2}\left\{T(x,y,\f z)+T(y,z,\f x)+T(z,x,\f y)\right.\\%
&\phantom{=\frac{1}{2}}\left.-T(x,\f y,z)-T(y,\f z,x) -T(z,\f
x,y)\right\}.
\end{split}
\]
Since $T$ is totally skew-symmetric therefore $\sx F(x,y,z)=0$.

Using \eqref{1a} and \eqref{TQ} we obtain $2F(x,\f
y,\xi)=T(x,y,\xi)$ and since $\left(\n_x
\eta\right)y\allowbreak=F(x,\f
y,\xi)$ we have %
\begin{equation}\label{n-eta-T}
2\left(\n_x \eta\right)y=T(x,y,\xi).
\end{equation}
Then, bearing in mind \eqref{kil}, the statement $T$ is a 3-form
implies that $\xi$ is Killing and according to \lemref{37}, the
statement
is true. %
\end{proof}

\begin{thm}
\label{prop-37} Let $(M,\f,\xi,\eta,g)$ be in the class
$\F_3\oplus\F_7$. Then there exists a  $\f$KT-connect\-ion $D$
determined by
    \[
        g(D_xy,z)=g(\n_xy,z)+\frac{1}{2}T(x,y,z),
    \]
    where the torsion $T$ is defined by
\begin{equation}\label{T37wedge} %
T(x,y,z)=\left(\eta\wedge \D\eta\right)(x,y,z)+\frac{1}{4}\sx
N(x,y,z)%
\end{equation} %
or equivalently
\begin{equation}\label{T37} %
T(x,y,z)=-\frac{1}{2} \sx\bigl\{F(x,y,\f
z)-3\eta(x)F(y,\f z,\xi)\bigr\}. %
\end{equation} %
\end{thm}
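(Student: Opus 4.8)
The plan is to take the manifestly skew-symmetric expression \eqref{T37wedge} as the working \emph{definition} of $T$, check that $Q:=\tfrac12 T$ satisfies the hypotheses of \thmref{thm-KT}, and finally identify \eqref{T37wedge} with \eqref{T37}. The first observation is that \eqref{T37wedge} is a $3$-form with no extra effort: $\eta\wedge\D\eta$ is a wedge product and hence totally skew-symmetric, while $\sx N$ is totally skew because $N$ is skew in its first two arguments (a cyclic sum of a tensor skew in its first two slots is totally skew). Therefore $Q=\tfrac12 T$ is a $3$-form, condition \eqref{1b} holds automatically, and since $\n$ is torsion-free the torsion of $D_xy=\n_xy+Q(x,y)$ equals $Q(x,y,z)-Q(y,x,z)=T(x,y,z)$; thus $D$ is a $\f$KT-connection as soon as it is shown to be natural.

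Next I would prove that \eqref{T37wedge} and \eqref{T37} coincide on $\F_3\oplus\F_7$. Since $\xi$ is Killing here by \lemref{37}, the identity $\left(\n_x\eta\right)y=F(x,\f y,\xi)$ combined with \eqref{kil} gives $\D\eta(y,z)=2F(y,\f z,\xi)$, so that $(\eta\wedge\D\eta)(x,y,z)=2\sx\eta(x)F(y,\f z,\xi)$. Expanding $\sx N$ through \eqref{NF}, using the symmetry $F(x,y,z)=F(x,z,y)$ of \eqref{1.3}, and discarding the contributions that vanish under the cyclic sum because $\sx F=0$, reduces $\tfrac14\sx N$ to $-\tfrac12\sx F(x,y,\f z)-\tfrac12\sx\eta(x)F(y,\f z,\xi)$; adding the two pieces recovers exactly \eqref{T37}.

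It then remains to verify the defining relation \eqref{1a}, namely $\tfrac12\bigl\{T(x,y,\f z)-T(x,\f y,z)\bigr\}=F(x,y,z)$, using the explicit $F$-form \eqref{T37}. I would substitute $\f z$ and $\f y$ into the cyclic expression, expand, and simplify the resulting terms of the form $F(\cdots,\f^2\cdots,\cdots)$ by means of $\f^2=-\Id+\eta\otimes\xi$ from \eqref{str}. The factors $\eta(\f\,\cdot)=0$ annihilate several of the correction terms, the symmetry \eqref{1.3} pairs others, and the constraint $\sx F=0$ together with the Killing relation disposes of the $\eta$-weighted remainders, leaving precisely $F(x,y,z)$. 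With \eqref{1a} and \eqref{1b} in hand, \thmref{thm-KT} shows that $D$ is natural and hence, by the first paragraph, a $\f$KT-connection.

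The main obstacle is this last reduction for \eqref{1a}: the presence of $\f y$ and $\f z$ forces repeated use of $\f^2=-\Id+\eta\otimes\xi$, and the careful bookkeeping of the $\eta$-correction terms — proving that they cancel exactly so that the surviving contribution is $F(x,y,z)$ and nothing more — is the delicate step. It is precisely here that the full strength of the assumption $M\in\F_3\oplus\F_7$, i.e. both $\sx F=0$ and $\xi$ Killing (\lemref{37}), is indispensable.
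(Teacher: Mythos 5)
Your overall architecture inverts the paper's: the paper takes the $F$-expression \eqref{T37} as the \emph{definition} of $T$, verifies \eqref{1a} by computing $T(x,y,\f z)-T(x,\f y,z)=2F(x,y,z)$, verifies \eqref{1b} by showing $T(x,y,z)+T(x,z,y)=\frac12\sx F(\f x,\f y,\f z)=0$ (this is precisely where $\sx F=0$ and the Killing condition from \lemref{37} enter), and only afterwards derives \eqref{T37wedge}. Your plan to start from \eqref{T37wedge} so that total skew-symmetry comes for free is legitimate in principle --- and your observation that the cyclic sum of a tensor skew in its first two slots is a $3$-form is correct --- but it makes the entire proof rest on the identification of \eqref{T37wedge} with \eqref{T37}, and that is exactly where your argument breaks.

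The value you assert for $\frac14\sx N$ is not what the computation from \eqref{NF} yields. Expanding $\sx N$ with the symmetry $F(x,y,z)=F(x,z,y)$, the consequence $F(x,\f y,z)=-F(x,y,\f z)+\eta(z)F(x,\f y,\xi)+\eta(y)F(x,\f z,\xi)$ of \eqref{1.3} and \eqref{str}, and the Killing condition \eqref{kil}, one finds
\[
\frac14\sx N(x,y,z)=-\frac12\sx F(x,y,\f z)+\frac12\sx\bigl\{\eta(x)F(y,\f z,\xi)\bigr\},
\]
with a \emph{plus} sign on the $\eta$-weighted term, not the minus sign you claim. (Sanity check on the vertical component: for $\M\in\F_7$ one has $N(x,y,z)=4\left(\n_x\eta\right)y\,\eta(z)$, hence $\frac14\sx N=+\sx\eta(x)\left(\n_y\eta\right)z$, while $\sx F(x,y,\f z)=-\sx\eta(x)\left(\n_y\eta\right)z$ there.) Consequently, with the normalization \eqref{eta-wedge} of $\eta\wedge\D\eta$, the right-hand side of \eqref{T37wedge} evaluates to $-\frac12\sx F(x,y,\f z)+\frac52\sx\{\eta(x)F(y,\f z,\xi)\}$, which differs from \eqref{T37} by $\sx\{\eta(x)F(y,\f z,\xi)\}$; the two displays match only after adjusting the coefficient of the $\eta\wedge\D\eta$ term. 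As your proof stands, the $3$-form property you obtain ``for free'' is established for an expression that is not the one you subsequently feed into \eqref{1a}, so condition \eqref{1b} for the tensor \eqref{T37} is never actually proved. The repair is the paper's own step: compute $T(x,y,z)+T(x,z,y)$ directly from \eqref{T37}, reduce it to $\frac12\sx F(\f x,y,z)=\frac12\sx F(\f x,\f y,\f z)$ using \eqref{1.3} and \eqref{kil}, and conclude that it vanishes because $\sx F=0$ on $\F_3\oplus\F_7$.
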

\begin{proof}
Let us suppose that $\M\in\F_3\oplus\F_7$ (i.e. $\xi$ is a Killing
vector field and $\s F=0$) and define a connection $D$ with
torsion $T$ determined by \eqref{T37}.

We substitute $\f z$ for $z$ in \eqref{T37}, apply properties
\eqref{1.3}--\eqref{1.4a} and obtain consequently
\begin{equation}\label{T1}
\begin{split}
T(x,y,\f z)=-\frac{1}{2} \bigl\{&F(x,y,\f^2 z)+3\eta(x)F(y,z,\xi)\\
&+F(y,\f z, \f x)+3\eta(y)F(z,x,\xi)+F(\f z,x,\f y)\bigr\}\\
=-\frac{1}{2} \bigl\{&-F(x,y,z)+2\eta(x)F(y,z,\xi)+2\eta(z)F(x,y,\xi)\\
&+F(y,z,x)+3\eta(y)F(z,x,\xi)+F(\f z,x,\f y)\bigr\}\\
\end{split}
\end{equation}
Analogously,  from \eqref{T37} we obtain the following
\begin{equation}\label{T2}
\begin{split}
T(x,\f y,z)=-\frac{1}{2} \bigl\{&F(x,y,z)+3\eta(x)F(y,z,\xi)+2\eta(y)F(z,x,\xi)\\
&+F(\f y,z, \f x)-F(z,x,y)+2\eta(z)F(x,y,\xi)\bigr\}\\
\end{split}
\end{equation}
Then we subtract \eqref{T2} from \eqref{T1} and applying
\eqref{1.3}--\eqref{1.4a} again, we get
\begin{equation}\label{TTF}
T(x,y,\f z)-T(x,\f y,z)=2F(x,y,z).
\end{equation}
Therefore \eqref{1a} is valid for the connection $D$ with property
$T=2Q$.

Moreover, we form the expression $T(x,y,z)+T(x,z,y)$ using
\eqref{T37}. By virtue of the consequence $F(y,\f z,\xi)=-F(z,\f
y,\xi)$ of \eqref{kil} and the symmetry of $F$ by the latter two
arguments known from \eqref{1.3}, we obtain
\[
T(x,y,z)+T(x,z,y)=\frac{1}{2}\sx F(\f x, y,z).
\]
According to \eqref{1.3} and because $\xi$ is Killing, we have
\[
T(x,y,z)+T(x,z,y)=\frac{1}{2}\sx F(\f x, \f y,\f z).
\]
Since the cyclic sum of $F$ over three arguments is zero for
$\F_3\oplus\F_7$ then \eqref{1b} is valid and $T$ is a 3-form.
Moreover, according to \thmref{thm-KT}, the connection $D$ with
torsion $T$ determined by \eqref{T37} is natural for the almost
contact B-metric
structure. Therefore $D$ is a $\f$KT-connection. %

In addition to that, according to \eqref{n-eta-T} and \eqref{kil},
for a $\f$KT-connection we have
\begin{equation}\label{detaT}
\D\eta(x,y)=2\left(\n_x\eta\right)y=T(x,y,\xi)=2F(x,\f y,\xi).
\end{equation}
Hence, the following equalities hold
\begin{equation}\label{eta-wedge}
\begin{split}
(\eta\wedge \D\eta)(x,y,z)&=\sx \eta(x)\D\eta(y,z)=\sx
\eta(x)T(x,y,\xi)\\%
&=\sx \eta(x)F(x,\f y,\xi).
\end{split}
\end{equation}
By virtue of the formula in \lemref{lem-NT}, we obtain
\begin{equation}\label{sNT} %
\begin{split}
\sx N(x,y,z)=3T(x,y,z)&+T(x,\f y,\f z)\\%
&+T(\f x,y,\f z)+T(\f x,\f y,z). %
\end{split}
\end{equation} %
Using \eqref{TTF}, \eqref{eta-wedge}, \eqref{sNT}  and the totally
skew-symmetry of $T$, we get the equivalence of \eqref{T37wedge}
and \eqref{T37}.
\end{proof}

Bearing in mind \eqref{1.2} and $F(x,\f y,\xi)=\left(\n_x
\eta\right)y=g\left(\n_x \xi,y\right)$, the torsion (0,3)-field
$T$ of $D$, given in \eqref{T37}, has the following form of type
(1,2)
\begin{equation}\label{T37a}%
\begin{split}
T(x,y)=\frac{1}{2}\bigl\{&2\left(\n_x\f\right)\f
y-\left(\n_y\f\right)\f x+\left(\n_{\f y}\f\right)x\\%
&+3
\eta(x)\n_y\xi-4\eta(y)\n_x\xi+2\left(\n_x\eta\right)y.\xi\bigr\}.
\end{split}
\end{equation} %

Formula \eqref{T37a} implies the following%
\begin{cor}\label{cor-T-KT}
The $\f$KT-connection $D$ determined by \eqref{T37a} has the
properties:
\[
T(x,\f y)=\f T(x,y)-2\left(\n_x\f\right)y,\qquad T(\f x,y)=\f
T(x,y)+2\left(\n_y\f\right)x.
\]%
\end{cor}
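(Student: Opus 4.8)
The plan is to avoid any direct manipulation of the rather unwieldy formula \eqref{T37a} and instead exploit the fact, established in \thmref{prop-37}, that $D$ is a natural connection, so in particular $D\f=0$. First I would write out the vanishing of $D\f$ explicitly. Since $\left(D_x\f\right)y=D_x(\f y)-\f\left(D_x y\right)$, I substitute the defining relation \eqref{1} into each term, use $\n_x(\f y)=\left(\n_x\f\right)y+\f\,\n_x y$, and recall that $Q=\tfrac{1}{2}T$ by \eqref{TQ}. The two occurrences of $\f\,\n_x y$ then cancel and one is left with
\[
0=\left(D_x\f\right)y=\left(\n_x\f\right)y+\tfrac{1}{2}T(x,\f y)-\tfrac{1}{2}\f\,T(x,y).
\]
Rearranging gives precisely the first claimed identity $T(x,\f y)=\f\,T(x,y)-2\left(\n_x\f\right)y$. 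This step is immediate and is essentially the whole content of the corollary; no structural relations beyond $D\f=0$ are needed.

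For the second identity I would not compute at all but deduce it from the first by invoking the skew-symmetry of $T$. Because $D$ is a $\f$KT-connection, $T$ is a $3$-form, so as a vector-valued form $T(\f x,y)=-T(y,\f x)$. Applying the first identity with the arguments interchanged yields $T(y,\f x)=\f\,T(y,x)-2\left(\n_y\f\right)x$, and substituting this together with $T(y,x)=-T(x,y)$ and the linearity of $\f$ gives
\[
T(\f x,y)=-\f\,T(y,x)+2\left(\n_y\f\right)x=\f\,T(x,y)+2\left(\n_y\f\right)x,
\]
which is the second identity.

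Alternatively, as the statement's phrasing suggests, both identities can be verified by brute force directly from \eqref{T37a}: one substitutes $\f y$ for $y$ (respectively $\f x$ for $x$), applies $\f$ to \eqref{T37a}, and compares the two expressions. The main obstacle on that route is purely bookkeeping. The key simplification is the differentiated relation $\left(\n_x\f\right)\f y+\f\left(\n_x\f\right)y=\left(\n_x\eta\right)y\cdot\xi+\eta(y)\n_x\xi$, which follows from $\f^2=-\Id+\eta\otimes\xi$, together with the auxiliary identities $\left(\n_x\f\right)\xi=-\f\,\n_x\xi$, $\eta\left(\left(\n_x\f\right)y\right)=-\left(\n_x\eta\right)\f y$, and $\f\xi=0$. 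One then has to track the several $\eta(x)\n_y\xi$, $\eta(y)\n_x\xi$ and $\cdot\,\xi$ terms so that they cancel correctly. I would prefer the conceptual argument above, which sidesteps this entirely and makes transparent that the corollary is just the coordinate-free restatement of $D\f=0$.
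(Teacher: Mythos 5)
Your proof is correct and follows essentially the same route as the paper: expanding $D\f=0$ via \eqref{1} and \eqref{TQ} is exactly the content of \eqref{TTF} (equivalently \eqref{1a} with $Q=\tfrac{1}{2}T$), which is what the paper invokes for the first identity, and the second identity is obtained in both cases from the first together with the skew-symmetry of $T$.
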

\begin{proof}
The former equality is a consequence of \eqref{TTF}. The latter
one follows from the former equality and the totally skew-symmetry
of $T$.
\end{proof}

\section{Curvature properties of the $\f$KT-connection}\label{sec:2a}

The curvature tensor $K$ for a linear connection $D$ is determined
as usual by $K(x,y)z=\left[D_x,D_y\right]z-D_{[x,y]}z$ and
$K(x,y,z,w)=g\left(K(x,y)z,w\right)$.

According to \eqref{1} and \eqref{1b}, the curvature tensor $K$ of
a
connection $D$ with the condition $Dg=0$ has the form%
\begin{equation}\label{KRQ}
\begin{split}
    K(x,y,z,w)=&R(x,y,z,w)+\left(D_x Q\right)(y,z,w)-\left(D_y
    Q\right)(x,z,w)\\%
    &+g\left(Q(y,z),Q(x,w)\right)-g\left(Q(x,z),Q(y,w)\right)\\%
    &+Q\left(T(x,y),z,w\right).
\end{split}
\end{equation}%
Using \eqref{KRQ} and \eqref{TQ}, we obtain the following known
equality for a connection $D$ with totally skew-symmetric torsion
$T$ and a $D$-parallel metric $g$ (cf. \cite{Fri-Iv})
\begin{equation}\label{KRT}
\begin{split}
    K(x,y,z,w)=&R(x,y,z,w)+\frac{1}{2}\left(D_x T\right)(y,z,w)
                        -\frac{1}{2}\left(D_y T\right)(x,z,w)
  \\%
        &+\frac{1}{4}g\left(T(x,y),T(z,w)\right)
        +\frac{1}{4}\sx
        \left\{g\left(T(x,y),T(z,w)\right)\right\}.
\end{split}
\end{equation}

Since the scalar curvatures of $D$ and $\n$ are determined by the
equalities $\tau^D=g^{ij}g^{ks}K(e_i,e_k,e_s,e_j)$ and
$\tau=g^{ij}g^{ks}R(e_i,e_k,e_s,e_j)$, respectively, then relation
\eqref{KRT} implies
\begin{equation}\label{tDt}
\tau^D=\tau-\frac{1}{4}\nT,
\end{equation}
where $\nT=g^{ij}g^{ks}g\bigl(T(e_i,e_k),T(e_j,e_s)\bigr)$.

\begin{thm}\label{thm-fiKelerov}
The curvature tensor $K$ of a $\f$KT-connection $D$ on $\M$
 is of $\f$-K\"ahler type if and only if it has the form
\begin{equation}\label{KRsT}
\begin{split}
    K(x,y,z,w)=R(x,y,z,w)&+\frac{1}{4}g\left(T(x,y),T(z,w)\right)\\%
        &-\frac{1}{12}\sx \left\{g\left(T(x,y),T(z,w)\right)\right\}.
\end{split}
\end{equation}
\end{thm}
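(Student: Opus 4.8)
The plan is to reduce the claimed equivalence to the first Bianchi identity for $K$ and then to pin down the derivative terms in \eqref{KRT} using the symmetries that a curvature-like tensor must have. First I would record the free structural consequences of $D$ being natural. Since $Dg=0$, the tensor $K$ already satisfies the antisymmetries \eqref{1.10}. Since $D\xi=0$ we get $K(x,y)\xi=0$, hence $K(x,y,z,\xi)=-K(x,y,\xi,z)=0$; and since $D\f=0$ the curvature commutes with $\f$, i.e. $K(x,y)\f z=\f K(x,y)z$. Applying the B-metric relation in \eqref{str} to $\f K(x,y)z$ and $\f w$ then gives
\[
K(x,y,\f z,\f w)=g\bigl(\f K(x,y)z,\f w\bigr)=-K(x,y,z,w)+\eta(w)K(x,y,z,\xi)=-K(x,y,z,w),
\]
so \eqref{1.12} holds identically. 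Consequently $K$ is of $\f$-K\"ahler type if and only if it is curvature-like, i.e. if and only if the first Bianchi identity \eqref{1.10B} holds.

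Next I would compare the two expressions for $K$. Writing $P(x,y,z,w):=g\bigl(T(x,y),T(z,w)\bigr)$ and $S(x,y,z,w):=\tfrac12\{(D_xT)(y,z,w)-(D_yT)(x,z,w)\}$, formula \eqref{KRT} reads $K=R+S+\tfrac14P+\tfrac14\sx P$, and subtracting \eqref{KRsT} shows that \eqref{KRsT} holds if and only if
\[
S(x,y,z,w)=-\tfrac13\sx P(x,y,z,w),
\]
an identity I will call $(\ast)$. For the easy direction, applying $\sx$ to $(\ast)$ and using $\sx S=\sx (D_xT)(y,z,w)$ together with $\sx\bigl(\sx P\bigr)=3\sx P$ yields $\sx (D_xT)(y,z,w)+\sx P=0$, which by the general first Bianchi identity for $D$ and the $3$-form identity $g\bigl(T(T(x,y),z),w\bigr)=g\bigl(T(x,y),T(z,w)\bigr)$ is exactly $\sx K=0$. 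Thus $(\ast)$ (equivalently \eqref{KRsT}) implies \eqref{1.10B}.

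The main obstacle is the converse: upgrading the cyclic-summed Bianchi relation to the full, un-summed identity $(\ast)$, since the cyclic sum by itself does not determine $S$. Two ingredients resolve this. First, a curvature-like $K$ enjoys the pair symmetry $K(x,y,z,w)=K(z,w,x,y)$; as $R$, $P$ and $\sx P$ are all pair-symmetric, this forces $S$ to be pair-symmetric. Second, I would invoke the standard formula expressing the exterior derivative of a form through $D$ and its torsion, applied to the $3$-form $T$; evaluating the six torsion-correction terms by the above identity $g\bigl(T(T(x,y),z),w\bigr)=g\bigl(T(x,y),T(z,w)\bigr)$ gives
\[
\D T(x,y,z,w)=2S(x,y,z,w)+2S(z,w,x,y)+2\sx P(x,y,z,w).
\]
With the pair symmetry this reads $\D T=4S+2\sx P$, so $2S+\sx P=\tfrac12\D T$ is totally skew-symmetric. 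Taking $\sx$ of this relation and substituting the Bianchi condition $\sx S=-\sx P$ (together with $\sx\D T=3\D T$) yields $\D T=\tfrac23\sx P$, and back-substitution gives $S=-\tfrac13\sx P$, which is $(\ast)$ and hence \eqref{KRsT}. This completes the forward direction.

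For completeness I would note the converse implication directly as well: the quadratic tensor $\tfrac14P-\tfrac1{12}\sx P$ is antisymmetric in each pair and has vanishing first Bianchi sum (because $\sx(\sx P)=3\sx P$), so the right-hand side of \eqref{KRsT} is curvature-like; combined with the automatic property \eqref{1.12} established in the first step, this shows that \eqref{KRsT} forces $K$ to be of $\f$-K\"ahler type, closing the equivalence.
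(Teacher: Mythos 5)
Your proof is correct, and its skeleton coincides with the paper's: both first observe that \eqref{1.12} holds automatically for a natural connection, so $\f$-K\"ahler type reduces to the first Bianchi identity $\sx K=0$, and both translate $\sx K=0$ into $\sx(D_xT)(y,z,w)=-\sx\{g(T(x,y),T(z,w))\}$ via the first Bianchi identity with torsion. The difference lies in how the cyclic relation is upgraded to the pointwise formula \eqref{KRsT}. The paper substitutes the cyclic relation into \eqref{KRT} to isolate the single term $-\tfrac12(D_zT)(x,y,w)$ and then eliminates it by an explicit double symmetrization (swap $y,w$ and add, then swap $z,w$ and subtract), with the pair symmetry of the curvature-like tensor $K$ used implicitly. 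You instead extract the pair symmetry of the derivative part $S$ abstractly from \eqref{KRT} and combine it with the exterior-derivative formula $\D T=\sx(D_xT)(y,z,w)-(D_wT)(x,y,z)+2\sx\{g(T(x,y),T(z,w))\}$ --- which the paper only invokes later, in the proof of \thmref{thm-K-sK=DT=0} --- to solve linearly for $S=-\tfrac13\sx\{g(T(x,y),T(z,w))\}$. Your route is slightly longer but more systematic, and it produces the additional identity $\D T=\tfrac23\sx\{g(T(x,y),T(z,w))\}$ under the hypothesis, which the paper's manipulation leaves hidden; your closing check that $\tfrac14 g(T(x,y),T(z,w))-\tfrac1{12}\sx\{g(T(x,y),T(z,w))\}$ is itself a curvature-like tensor also makes explicit the converse implication that the paper passes over in silence.
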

\begin{proof}
Since $K$ is a curvature tensor of a natural connection then
property \eqref{1.12} is valid for $K$. Hence, $K$ becomes a
$\f$-K\"ahler-type tensor when $\s K=0$ is satisfied additionally.
The first Bianchi identity for a $\f$KT-connection $D$ with
torsion $T$ can be written in the form
\[
\sx K(x,y,z,w)=\sx \left(D_xT\right)(y,z,w)+\sx
\left\{g\left(T(x,y),T(z,w)\right)\right\},
\]
because of $Dg=0$ and $T$ is a 3-form.
Then, $K$ is of $\f$-K\"ahler type if and only if the following is
valid
\begin{equation}\label{DTgT}
\sx \left(D_x T\right)(y,z,w)=-\sx
\left\{g\left(T(x,y),T(z,w)\right)\right\}.
\end{equation}
In this case, according to \eqref{KRT} and \eqref{DTgT}, $K$ has
the form
\begin{equation}\label{KRDT}
\begin{split}
    K(x,y,z,w)=&R(x,y,z,w)-\frac{1}{2}\left(D_z T\right)(x,y,w)
                          \\%
        &-\frac{1}{4}g\left(T(y,z),T(x,w)\right)
        -\frac{1}{4}g\left(T(z,x),T(y,w)\right).
\end{split}
\end{equation}
Hence, we change the positions of $y$ and $w$ in \eqref{KRDT} and
add the obtained equality to the original form of \eqref{KRDT}. In
the result we exchange $z$ and $w$ and subtract the obtained
equality from the original one. In such a way we get \eqref{KRsT}.
\end{proof}

Let us consider the case when  $D$ has a $D$-parallel torsion
tensor $T$. Then, using \eqref{KRT}, the curvature tensor $K$ gets
the following form
\begin{equation}\label{KR-DT=0}
\begin{split}
    K(x,y,z,w)=R(x,y,z,w)
        &+\frac{1}{4}g\left(T(x,y),T(z,w)\right)\\%
        &+\frac{1}{4}\sx \left\{g\left(T(x,y),T(z,w)\right)\right\}.
\end{split}
\end{equation}%

\begin{thm}\label{thm-K-sK=DT=0}
Let a $\f$KT-connection $D$ have a $D$-parallel torsion tensor $T$
on $\M$. Then the following conditions are equivalent:
\begin{enumerate}[(i)]
    \item The curvature tensor $K$ is a $\f$-K\"ahler-type tensor and it has the form
\begin{equation}\label{KR3-sK=DT=0}
    K(x,y,z,w)=R(x,y,z,w)
        +\frac{1}{4}g\left(T(x,y),T(z,w)\right).
\end{equation}%
    \item The torsion 3-form $T$ is closed;
    \item The following equality is valid
\begin{equation}\label{sgTT=0}
    \sx \left\{g\left(T(x,y),T(z,w)\right)\right\}=0.
\end{equation}
\end{enumerate}
\end{thm}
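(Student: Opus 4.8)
The plan is to route all three implications through condition (iii), since the hypothesis $DT=0$ makes \eqref{KR-DT=0} the natural point of departure. With $DT=0$ that formula reads
\[
K(x,y,z,w)=R(x,y,z,w)+\frac14 g\bigl(T(x,y),T(z,w)\bigr)+\frac14\sx\bigl\{g\bigl(T(x,y),T(z,w)\bigr)\bigr\},
\]
so a direct comparison with \eqref{KR3-sK=DT=0} shows that the explicit expression for $K$ in (i) holds exactly when the cyclic term vanishes, i.e. precisely when \eqref{sgTT=0} holds. For the $\f$-K\"ahler part of (i) I would appeal to \thmref{thm-fiKelerov}: the tensor $K$ is of $\f$-K\"ahler type if and only if \eqref{DTgT} is satisfied, and under $DT=0$ the left-hand side $\sx\left(D_xT\right)(y,z,w)$ of \eqref{DTgT} vanishes identically, so that condition again collapses to (iii). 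Hence both constituents of (i) are separately equivalent to (iii), giving (i)$\Leftrightarrow$(iii) with no further computation.

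The substantive equivalence is (ii)$\Leftrightarrow$(iii). Here I would compute the exterior derivative of the torsion $3$-form through the connection $D$ itself, using the standard formula expressing $\D$ in terms of a linear connection and its torsion: writing $(X_0,X_1,X_2,X_3)=(x,y,z,w)$,
\[
\D T(x,y,z,w)=\sum_{i=0}^{3}(-1)^i\bigl(D_{X_i}T\bigr)(\dots)-\sum_{i<j}(-1)^{i+j}\,T\bigl(T(X_i,X_j),\dots\bigr).
\]
Since $T$ is $D$-parallel the first sum disappears, leaving only the six quadratic torsion terms. Invoking the total skew-symmetry of $T$ together with the identity $T\bigl(T(x,y),z,w\bigr)=g\bigl(T(x,y),T(z,w)\bigr)$, which follows from the cyclic invariance of a $3$-form, these six terms pair off and reduce to a nonzero multiple of the cyclic sum, in fact
\[
\D T(x,y,z,w)=2\sx\bigl\{g\bigl(T(x,y),T(z,w)\bigr)\bigr\}.
\]
Therefore $T$ is closed if and only if \eqref{sgTT=0} holds, which is (ii)$\Leftrightarrow$(iii); combined with the previous paragraph this closes the cycle.

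The main obstacle is the bookkeeping in this last step. I would have to fix the sign convention in the exterior-derivative-with-torsion formula so that it is consistent with the convention used elsewhere in the paper (recall $\D\eta(x,y)=\left(\n_x\eta\right)y-\left(\n_y\eta\right)x$, as in \eqref{detaT}), verify the auxiliary identity $T\bigl(T(x,y),z,w\bigr)=g\bigl(T(x,y),T(z,w)\bigr)$, and check carefully that the three antisymmetric pairs of quadratic terms collapse to exactly twice $\sx\bigl\{g\bigl(T(x,y),T(z,w)\bigr)\bigr\}$ rather than to some other combination. Everything else is a routine comparison of the curvature formulas already derived.
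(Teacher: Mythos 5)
Your proposal is correct and follows essentially the same route as the paper: the equivalence (i)$\Leftrightarrow$(iii) by comparing \eqref{KR-DT=0} with the $\f$-K\"ahler criterion from \thmref{thm-fiKelerov} (your splitting of (i) into its two conjuncts, each separately equivalent to (iii) via \eqref{DTgT}, just makes the paper's terse argument explicit), and (ii)$\Leftrightarrow$(iii) by the same expression $\D T(x,y,z,w)=2\sx\left\{g\left(T(x,y),T(z,w)\right)\right\}$ under $DT=0$ that the paper writes down. The sign bookkeeping you flag does work out to exactly twice the cyclic sum with the paper's conventions, so there is no gap.
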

\begin{proof}
Bearing in mind \eqref{KR-DT=0} and \eqref{KRsT}, we get that (i)
and (iii) are equivalent, according to \thmref{thm-fiKelerov}.

Since the torsion tensor $T$ of a $\f$KT-connection $D$ is a
3-form, then the exterior derivative of $T$ has the expression
\[
\begin{split}
    \D{T}(x,y,z,w)=&\sx\left(D_x T\right)(y,z,w)
                        -\left(D_w T\right)(x,y,z)
  \\%
        &+2\sx
        \left\{g\left(T(x,y),T(z,w)\right)\right\}.
\end{split}
\]
\[
    \D{T}(x,y,z,w)=\sx\left(D_x T\right)(y,z,w)
                        -\left(D_w T\right)(x,y,z)+2\sx
        \left\{g\left(T(x,y),T(z,w)\right)\right\}.
\]
If $DT=0$ then $\D T(x,y,z,w)=
        2\sx
        \left\{g\left(T(x,y),T(z,w)\right)\right\}$ is valid.
        Therefore, conditions (ii) and (iii) are equivalent.
\end{proof}

\begin{prop}\label{prop-Rf37}
Let $\M$ be a manifold in $\F_3\oplus\F_7$ with a $D$-parallel
torsion tensor $T$ and a curvature tensor $K$ of $\f$-K\"ahler
type for the $\f$KT-connection $D$. The curvature tensor $R$ has
the following properties with respect to the structure:
 \[
\begin{split}
R(x,y,\f z,\f w) =&- R(x, y, z, w)-\frac{1}{4}g\bigl(T(x,y),
T(z,w)+T(\f z,\f w)\bigr),\\%
R(x,y,z,\xi) =&\frac{1}{2}g\bigl(T(x,y), \n_z \xi\bigr), %
\end{split}
\]
where $T$ is determined by \eqref{T37a}.
\end{prop}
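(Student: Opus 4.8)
The plan is to derive both identities from the $\f$-K\"ahler-type form \eqref{KR3-sK=DT=0} of $K$, which holds by \thmref{thm-K-sK=DT=0} because $K$ is of $\f$-K\"ahler type and $DT=0$. The strategy is to transfer the algebraic symmetry of $K$ onto $R$, since $R$ and $K$ differ by an explicit torsion-quadratic term. For the first identity I would use that $K$ satisfies the defining property \eqref{1.12} of a $\f$-K\"ahler-type tensor, namely $K(x,y,\f z,\f w)=-K(x,y,z,w)$. Substituting \eqref{KR3-sK=DT=0} on both sides, writing $K(x,y,\f z,\f w)=R(x,y,\f z,\f w)+\tfrac14 g\bigl(T(x,y),T(\f z,\f w)\bigr)$ and $-K(x,y,z,w)=-R(x,y,z,w)-\tfrac14 g\bigl(T(x,y),T(z,w)\bigr)$, and solving for $R(x,y,\f z,\f w)$, should produce exactly the claimed formula once the two torsion terms are collected as $-\tfrac14 g\bigl(T(x,y),T(z,w)+T(\f z,\f w)\bigr)$.

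For the second identity I would specialize the first (or work directly) by setting $w=\xi$. The key computational input is \corref{cor-T-KT}, which gives $T(x,\f y)=\f T(x,y)-2(\n_x\f)y$, together with the structural relations \eqref{str}. Since $D\eta=D\xi=0$ for the natural connection $D$, one expects $K(x,y,z,\xi)=0$ to follow from $D$-parallelism of $\xi$; indeed $K(x,y)\xi=[D_x,D_y]\xi-D_{[x,y]}\xi=0$, so $K(x,y,z,\xi)=g(K(x,y)z,\xi)=-g(z,K(x,y)\xi)=0$ using $Dg=0$ and the skew-symmetry \eqref{1.10} in the last two slots. Feeding $K(x,y,z,\xi)=0$ into \eqref{KR3-sK=DT=0} with $w=\xi$ immediately yields $R(x,y,z,\xi)=-\tfrac14 g\bigl(T(x,y),T(z,\xi)\bigr)$, and then I would rewrite $T(z,\xi)$ in terms of $\n_z\xi$. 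From \eqref{n-eta-T} and \eqref{detaT} we have $T(z,\xi,w)=2(\n_z\eta)w=2g(\n_z\xi,w)$, so $T(z,\xi)=2\n_z\xi$ as a vector field, giving $R(x,y,z,\xi)=-\tfrac14 g\bigl(T(x,y),2\n_z\xi\bigr)=-\tfrac12 g\bigl(T(x,y),\n_z\xi\bigr)$.

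The sign discrepancy against the stated $+\tfrac12$ signals the one place demanding care: the bookkeeping of the antisymmetry conventions in the last two arguments of $R$ and $K$. I would reconcile this by tracking whether the relevant slot carries $z$ or $\xi$ and applying \eqref{1.10} consistently, most likely replacing $R(x,y,z,\xi)=-R(x,y,\xi,z)$ and $T(z,\xi)=-T(\xi,z)$ at the right moment so that the two sign flips combine to the asserted positive coefficient. The main obstacle I anticipate is precisely this sign and index-placement reconciliation in the $\xi$-contraction, rather than anything structurally deep; the first identity is essentially immediate from property \eqref{1.12}, while the second rests on the vanishing $K(x,y,z,\xi)=0$ plus the clean substitution $T(z,\xi)=2\n_z\xi$ coming from \eqref{detaT}. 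Throughout I would invoke that $T$ is given explicitly by \eqref{T37a}, so every torsion expression can, if needed, be expanded in covariant derivatives of $\f$ and $\xi$ to confirm the final normalization.
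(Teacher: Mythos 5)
Your derivation of the first identity is exactly the paper's: apply the $\f$-K\"ahler property \eqref{1.12} to $K$ written in the form \eqref{KR3-sK=DT=0} and collect the two torsion terms. Your route to the second identity ($K(x,y,z,\xi)=0$ from $D\xi=0$ and $Dg=0$, fed into \eqref{KR3-sK=DT=0} with $w=\xi$) is computationally the same as the paper's substitution of $w=\xi$ and $\f\xi=0$ into the first identity, so the overall approach matches. The one genuine loose end is the sign you flag, and neither of your proposed reconciliations actually locates it: $R(x,y,z,\xi)=-R(x,y,\xi,z)$ is irrelevant here, and $T(z,\xi)=-T(\xi,z)$ is the wrong transposition. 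The point is that \eqref{n-eta-T} reads $T(z,w,\xi)=2\left(\n_z\eta\right)w$, with $\xi$ in the \emph{third} argument, so to extract the vector $T(z,\xi)$ you must swap the last two arguments of the 3-form: $g\left(T(z,\xi),w\right)=T(z,\xi,w)=-T(z,w,\xi)=-2\left(\n_z\eta\right)w=-2g\left(\n_z\xi,w\right)$, i.e. $T(z,\xi)=-2\n_z\xi$, which is precisely the identity the paper invokes. With this corrected sign, $R(x,y,z,\xi)=-\frac{1}{4}g\bigl(T(x,y),-2\n_z\xi\bigr)=+\frac{1}{2}g\bigl(T(x,y),\n_z\xi\bigr)$ as asserted, and your argument is then complete.
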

\begin{proof}
Bearing in mind \thmref{thm-K-sK=DT=0}, we apply property
\eqref{1.12} for $K$ determined by \eqref{KR3-sK=DT=0} and obtain
the former equality in the statement. It implies the latter
equality since $\f\xi=0$ and $T(z,\xi)=-2\n_z\xi$.
\end{proof}

Further, in the next two subsections, we consider curvature
properties of the $\f$KT-con\-nec\-tion $D$ in the two special
cases -- on the horizontal component $\F_3$ and on the vertical
com\-ponent $\F_7$ of the class $\F_3\oplus\F_7$, where $D$
exists.

\subsection{Curvature properties of the $\f$KT-connection on the
horizontal component $\F_3$}\label{sec:3}

Since the horizontal restriction of any $\F_3$-manifold is an
almost complex manifold with Norden metric in the class $\W_3$
(also known as a quasi-K\"ahler manifold with Norden metric), then
the curvature properties can be obtained in an analogous way as in
\cite{Mek2}. Besides, the curvature properties of a
$\F_3$-manifold are consequences of the corresponding properties
for any manifold in $\F_3\oplus\F_7$, given in the general part of
Section~\ref{sec:2a}.

Bearing in mind \eqref{T37a}, the torsion of
the $\f$KT-connection $D$ on a $\F_3$-manifold has the form %
\begin{equation}\label{T3}%
T(x,y)=\frac{1}{2}\bigl\{2\left(\n_x\f\right)\f
y-\left(\n_y\f\right)\f x+\left(\n_{\f y}\f\right)x\bigr\}.
\end{equation} %

According to \thmref{thm-fiKelerov}, the curvature tensor $K$ of
this connection on a $\F_3$-manifold $\M$
 is of $\f$-K\"ahler type if and only if $K$ has the form in
 \eqref{KRsT}, where $T$ is determined by \eqref{T3}.
Hence, by virtue of \eqref{tDt}, \eqref{T3} and \eqref{snf}, we
get immediately the following relation between the scalar
curvatures for the $\f$KT-connection $D$ and the Levi-Civita
connection $\n$:
\[
\tau^D =\tau + \frac{3}{8}\nf.
\]

The following statement is a direct consequence of the latter
equality.
\begin{cor}
Let $\M$ be a $\F_3$-manifold with a curvature tensor of
$\f$-K\"ahler type for the $\f$KT-connection $D$. Then $\M$ is an
isotropic-$\F_0$-manifold if and only if the scalar curvatures for
$D$ and $\n$ are equal.
\end{cor}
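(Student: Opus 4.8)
The plan is to read off the asserted equivalence directly from the scalar-curvature relation $\tau^D = \tau + \frac{3}{8}\nf$ established in the display immediately preceding the corollary. First I would observe that the difference between the two scalar curvatures is proportional to the square norm of $\n\f$, with the fixed nonzero coefficient $\frac{3}{8}$; that is, $\tau^D - \tau = \frac{3}{8}\nf$. Hence the two scalar curvatures coincide precisely when this correction term vanishes, i.e. exactly when $\nf = 0$.

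Next I would invoke the definition of an isotropic-$\F_0$-manifold as an almost contact B-metric manifold for which the square norm $\nf$ of $\n\f$ is zero. Comparing this with the previous observation, the condition $\tau^D = \tau$ is equivalent to $\frac{3}{8}\nf = 0$, hence to $\nf = 0$, which is exactly the defining condition for $\M$ to be isotropic-$\F_0$. It is worth noting that this uses only $\nf = 0$ and not $\n\f = 0$: as remarked after \eqref{snf}, the vanishing of the square norm does not force $\n\f$ itself to vanish under the pseudo-Riemannian B-metric, so the correct target of the equivalence is precisely the isotropic-$\F_0$ condition rather than $\F_0$.

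There is essentially no obstacle here, since all of the analytic content has already been absorbed into the derivation of $\tau^D = \tau + \frac{3}{8}\nf$, which in turn rests on the trace identity \eqref{tDt}, the explicit $\F_3$-torsion formula \eqref{T3}, and the definition \eqref{snf} of the square norm. The corollary is then a one-line logical equivalence obtained by setting the difference $\tau^D - \tau$ equal to zero and using the positivity-independent fact that a nonzero scalar multiple of $\nf$ vanishes if and only if $\nf$ does.
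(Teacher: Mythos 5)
Your proposal is correct and matches the paper's own (implicit) proof exactly: the corollary is stated there as a direct consequence of the relation $\tau^D=\tau+\frac{3}{8}\nf$, so equality of the scalar curvatures is equivalent to $\nf=0$, which is precisely the definition of an isotropic-$\F_0$-manifold. Your added remark that only the vanishing of the square norm, not of $\n\f$ itself, is used is a correct and worthwhile clarification.
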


According to \propref{prop-Rf37} and \corref{cor-T-KT}, we obtain
the following
\begin{cor}
Let $\M$ be a $\F_3$-manifold with a $D$-parallel torsion tensor
$T$ and a curvature tensor $K$ of $\f$-K\"ahler type for the
$\f$KT-connection $D$. The curvature tensor $R$ has the following
properties with respect to the structure:
 \[
\begin{split}
R(x,y,z,\xi) &= 0,
\\
R(x,y,\f z,\f w) &=- R(x, y, z, w) %
+\frac{1}{2}g\Bigl(%
T(x,y), \left(\n_{\f z} \f\right)w+\left(\n_{w} \f\right)\f
z\Bigr),
\end{split}
\]
where $T$ is determined by \eqref{T3}.
\end{cor}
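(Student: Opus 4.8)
The plan is to specialize the two general results that immediately precede this corollary — \propref{prop-Rf37} and \corref{cor-T-KT} — to the subclass $\F_3$, where the defining condition $F(x,y,\xi)=0$ from \eqref{1.4} collapses most of the extra terms. First I would record the basic consequence that $\n_z\xi=0$ on any $\F_3$-manifold: since $F(x,\f y,\xi)=(\n_x\eta)y=g(\n_x\xi,y)$ and $F(x,y,\xi)=0$, substituting $\f y$ for $y$ gives $g(\n_x\xi,y)=0$ for all $y$, hence $\n\xi=0$ and $\n\eta=0$. This single fact also shortens \eqref{T37a} to \eqref{T3}, so on a $\F_3$-manifold the torsion of $D$ is purely a combination of values of $\n\f$.

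The first identity is then immediate: the second line of \propref{prop-Rf37} reads $R(x,y,z,\xi)=\tfrac12 g(T(x,y),\n_z\xi)$, and $\n_z\xi=0$ forces $R(x,y,z,\xi)=0$.

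For the second identity I would compare the first line of \propref{prop-Rf37}, namely $R(x,y,\f z,\f w)=-R(x,y,z,w)-\tfrac14 g\bigl(T(x,y),T(z,w)+T(\f z,\f w)\bigr)$, with the claimed form. It therefore suffices to prove the pointwise vector identity $T(z,w)+T(\f z,\f w)=-2\bigl((\n_{\f z}\f)w+(\n_w\f)\f z\bigr)$. Applying the two formulas of \corref{cor-T-KT} in succession, first $T(\f z,\f w)=\f T(\f z,w)-2(\n_{\f z}\f)w$ and then $T(\f z,w)=\f T(z,w)+2(\n_w\f)z$, I obtain $T(\f z,\f w)=\f^2 T(z,w)+2\f(\n_w\f)z-2(\n_{\f z}\f)w$. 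Two simplifications then turn this into the desired expression. Since $\f^2=-\Id+\eta\otimes\xi$, I would first check that $\f^2 T(z,w)=-T(z,w)$, which reduces to $\eta(T(z,w))=T(z,w,\xi)=0$; this holds because every term of $T(z,w)$ in \eqref{T3} is a value of $\n\f$, and pairing with $\xi$ yields only factors $F(\cdot,\cdot,\xi)=0$. Next I would replace $\f(\n_w\f)z$ by $-(\n_w\f)\f z$, using the relation obtained by covariantly differentiating $\f^2=-\Id+\eta\otimes\xi$, which on an $\F_3$-manifold (where $\n\xi=0$ and $\n\eta=0$) reduces to $\f(\n_w\f)z+(\n_w\f)\f z=0$. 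Substituting both simplifications gives $T(z,w)+T(\f z,\f w)=-2(\n_{\f z}\f)w-2(\n_w\f)\f z$, and plugging this into \propref{prop-Rf37} reproduces the stated form of $R(x,y,\f z,\f w)$, the factor $-\tfrac14$ becoming $+\tfrac12$ after absorbing the $-2$.

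The main obstacle I anticipate is purely the bookkeeping in this last computation: keeping track of which argument $\f$ acts on when \corref{cor-T-KT} is applied twice, and verifying the two auxiliary relations ($\eta(T)=0$ and the anticommutation $\f(\n_w\f)=-(\n_w\f)\f$ in the $\F_3$ setting) carefully enough that the numerical factors match. Once $\n\xi=0$ is in hand, everything else is a direct substitution of the $\F_3$ vanishing conditions into the already-established general formulas.
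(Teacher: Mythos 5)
Your proposal is correct and follows exactly the route the paper intends: the paper's entire justification is ``According to \propref{prop-Rf37} and \corref{cor-T-KT}, we obtain the following,'' and you specialize \propref{prop-Rf37} to $\F_3$ (where $F(\cdot,\cdot,\xi)=0$ gives $\n\xi=0$, killing the $\xi$-term) and use the two identities of \corref{cor-T-KT} to rewrite $T(z,w)+T(\f z,\f w)$. Your intermediate checks ($\eta(T(z,w))=0$ via \eqref{n-eta-T} and the anticommutation $\f(\n_w\f)z=-(\n_w\f)\f z$ on $\F_3$) are exactly the details the paper leaves implicit, and the sign and factor bookkeeping works out.
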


\subsection{Curvature properties of the $\f$KT-connection on the
vertical component $\F_7$}\label{sec:4}

Now, we will pay attention on the other case when the manifold
$\M$ belongs to the class $\F_7$, i.e. the manifold has the
properties: a zero cyclic sum of $F$, a Killing $\xi$ and a
nonzero $\n\xi$ (or the horizontal component of $F$ is zero). For
such a manifold the torsion of
the $\f$KT-connection $D$ has the form %
\begin{equation}\label{T7}%
T(x,y)=2\left\{\eta(x)\n_y\xi-\eta(y)\n_x\xi+\left(\n_x\eta\right)y.\xi\right\}
\end{equation} or
\[
T(x,y,z)=\left(\eta\wedge \D\eta\right)(x,y,z)
=2\sx\bigl\{\eta(x)F(y,\f z,\xi)\bigr\}.
\]

Using \eqref{KRT} and \eqref{T7}, by direct calculations we get
the following form of $K$ on a $\F_7$-manifold:
\begin{equation*}\label{KR7-all}
\begin{split}
K(x,y,z,w) =& R(x, y, z, w) + 2 \left(\n_x \eta\right)y\left(\n_z
\eta\right)w\\%
&+\left(\n_y \eta\right)z\left(\n_x \eta\right)w+\left(\n_z
\eta\right)x\left(\n_y
\eta\right)w\\%
&-\eta(x)\bigl\{%
 \left(\n_y \n_z \eta\right)w%
-\left(\n_y \n_w\eta\right)z \bigr\}\\%
&+\eta(y)\bigl\{%
\left(\n_x \n_z \eta\right)w-\left(\n_x \n_w \eta\right)z
\bigr\}\\%
&-\eta(z)\bigl\{%
 \left(\n_x \n_y \eta\right)w-\left(\n_y \n_x \eta\right)w
+\left(\n_y \n_w \eta\right)x-\left(\n_x \n_w \eta\right)y\bigr\}\\%
&+\eta(w)\bigl\{%
 \left(\n_x \n_y \eta\right)z-\left(\n_y \n_x \eta\right)z%
+\left(\n_y \n_z \eta\right)x-\left(\n_x \n_z \eta\right)y\bigr\}\\%
&-\eta(y)\eta(z)g\left(\n_x \xi,\n_w \xi\right)%
+\eta(x)\eta(z)g\left(\n_y \xi,\n_w \xi\right)\\
&-\eta(x)\eta(w)g\left(\n_y \xi,\n_z \xi\right)%
 +\eta(y)\eta(w)g\left(\n_x \xi,\n_z \xi\right).
\end{split}
\end{equation*}

Since $\M\in\F_7$, because of%
\[
\left(\n_x\f\right)y=-g\left(\n_x \xi,\f y\right)
\xi-\eta(y)\f\n_x \xi,
\]
we obtain the formula for the square norm of $\n\f$ as follows %
\begin{equation}\label{snf7} %
\nf=-2g^{ij}g\left(\n_{e_i} \xi,\n_{e_j} \xi\right).
\end{equation} %

Equations 
\eqref{tDt}, \eqref{T7} and \eqref{snf7} yield the following
relation between the scalar curvatures for $D$ and $\n$:
\[
\tau^D=\tau+\frac{3}{2}\nf.
\]

The last equality implies the following
\begin{cor}
Let $\M$ belong to $\F_7$. Then $\M$ is an
isotropic-$\F_0$-mani\-fold if and only if the scalar curvatures
for $D$ and $\n$ are equal.
\end{cor}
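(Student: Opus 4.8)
The plan is to read the corollary off directly from the scalar-curvature identity
\[
\tau^D=\tau+\frac{3}{2}\nf
\]
established just above, since this identity already packages all the geometric content needed; no further manipulation of the curvature tensor $K$ or of the torsion $T$ of $D$ is required. The whole argument is an unwinding of the definition of an isotropic-$\F_0$-manifold.

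First I would recall that, by definition, $\M$ is an isotropic-$\F_0$-manifold exactly when the square norm $\nf$ of $\n\f$ vanishes. Rearranging the displayed identity gives $\tau^D-\tau=\frac{3}{2}\nf$. Since the coefficient $\frac{3}{2}$ is a nonzero constant, the right-hand side vanishes if and only if $\nf=0$. Hence $\tau^D=\tau$ holds precisely when $\nf=0$, which is precisely the isotropic-$\F_0$ condition on $\M\in\F_7$; this is the asserted equivalence, and it follows with no case analysis.

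There is essentially no obstacle in the corollary itself: all the real work was already carried out in deriving the scalar-curvature identity from \eqref{tDt}, \eqref{T7} and \eqref{snf7} (in particular in obtaining the relation $\nf=-2g^{ij}g\left(\n_{e_i}\xi,\n_{e_j}\xi\right)$ that feeds into it). The one point I would flag is conceptual rather than computational: because $g$ has neutral-type signature $(n,n+1)$, the square norm $\nf$ need not be sign-definite and may vanish without $\n\f$ itself vanishing. This is exactly why the equivalence here is with the \emph{isotropic}-$\F_0$ condition and not with the stronger $\F_0$ condition $\n\f=0$, and it is the reason the notion of isotropic-$\F_0$-manifold was introduced earlier in Section~\ref{sec:1}.
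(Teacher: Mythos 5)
Your proposal is correct and matches the paper's own (implicit) argument: the corollary is presented there as an immediate consequence of the identity $\tau^D=\tau+\frac{3}{2}\nf$, and your unwinding of the definition of an isotropic-$\F_0$-manifold is exactly that step. Your remark on the indefinite signature of $g$, explaining why the equivalence is with the isotropic-$\F_0$ condition rather than $\n\f=0$, is a correct and worthwhile clarification.
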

\begin{prop}\label{prop-KR7}
The curvature tensor $K$ of the $\f$KT-connection $D$ on a
$\F_7$-ma\-ni\-fold $\M$
 is of $\f$-K\"ahler type if and only if it has the form
\begin{equation}\label{KR7}
\begin{split}
&K(x,y,z,w) = R(x, y, z, w)%
\\%
&\phantom{K(x,y,z,w) =}
-\eta(y)\eta(z)g\left(\n_x \xi,\n_w \xi\right)%
+\eta(x)\eta(z)g\left(\n_y \xi,\n_w \xi\right)\\
&\phantom{K(x,y,z,w) =}
-\eta(x)\eta(w)g\left(\n_y \xi,\n_z \xi\right)%
 +\eta(y)\eta(w)g\left(\n_x \xi,\n_z \xi\right)\\
&+ \frac{1}{3}\left\{2 \left(\n_x \eta\right)y\left(\n_z
\eta\right)w-\left(\n_y \eta\right)z\left(\n_x
\eta\right)w-\left(\n_z \eta\right)x\left(\n_y
\eta\right)w\right\}.
\end{split}
\end{equation}
\end{prop}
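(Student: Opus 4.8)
The plan is to read off the equivalence directly from \thmref{thm-fiKelerov}, which already asserts that $K$ is of $\f$-K\"ahler type if and only if $K$ has the form \eqref{KRsT}. Hence no new structural argument is needed: the whole task reduces to a computation, namely substituting the explicit torsion \eqref{T7} of the $\f$KT-connection on a $\F_7$-manifold into \eqref{KRsT} and verifying that the result coincides term by term with \eqref{KR7}. The ``if and only if'' is then inherited from \thmref{thm-fiKelerov}.

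First I would compute the quadratic term $g\left(T(x,y),T(z,w)\right)$ from \eqref{T7}. Writing $T(x,y)=2\bigl\{\eta(x)\n_y\xi-\eta(y)\n_x\xi+\left(\n_x\eta\right)y.\xi\bigr\}$ and expanding the inner product into its nine pieces, I would use the three elementary identities $g(\xi,\xi)=1$, $g(\n_a\xi,\xi)=0$ (both following from \eqref{str} and the compatibility of $\n$ with $g$) and $g(\n_a\xi,b)=\left(\n_a\eta\right)b$. The four mixed pieces carrying a factor $g(\n_a\xi,\xi)$ then drop out, and what survives is
\[
\begin{aligned}
g\left(T(x,y),T(z,w)\right)=4\bigl\{&\eta(x)\eta(z)g(\n_y\xi,\n_w\xi)-\eta(x)\eta(w)g(\n_y\xi,\n_z\xi)\\
&-\eta(y)\eta(z)g(\n_x\xi,\n_w\xi)+\eta(y)\eta(w)g(\n_x\xi,\n_z\xi)\\
&+\left(\n_x\eta\right)y\,\left(\n_z\eta\right)w\bigr\}.
\end{aligned}
\]
Dividing by $4$, the first four terms already reproduce verbatim the four $\eta\eta\,g(\n\xi,\n\xi)$ lines of \eqref{KR7}, so only the first-derivative quadratic contributions remain to be matched.

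The delicate step, which I expect to be the main obstacle, is the cyclic sum $\sx\left\{g\left(T(x,y),T(z,w)\right)\right\}$ occurring in \eqref{KRsT}. I would permute $x\to y\to z\to x$ in the displayed expression and collect the resulting six $\eta\eta\,g(\n\xi,\n\xi)$ products together with the six $\left(\n\eta\right)\left(\n\eta\right)$ products. The point is that the $\eta\eta\,g(\n\xi,\n\xi)$ terms are \emph{not} individually cyclically invariant, yet they cancel pairwise under the cyclic permutation: each term of the shape $\eta(\cdot)\eta(\cdot)g(\cdot,\n_w\xi)$ is annihilated by a matching term coming from a different summand, and the same happens for the terms carrying the factor $\eta(w)$. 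Thus only the $\left(\n\eta\right)\left(\n\eta\right)$ piece contributes, giving $\sx\left\{g\left(T(x,y),T(z,w)\right)\right\}=4\sx\left\{\left(\n_x\eta\right)y\,\left(\n_z\eta\right)w\right\}$. Tracking the twelve permuted terms correctly is the real work here.

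Finally I would assemble \eqref{KRsT}. The four $\eta\eta$ terms arise entirely from $\tfrac14 g\left(T(x,y),T(z,w)\right)$ and match the middle lines of \eqref{KR7}. For the first-derivative quadratic terms, combining the coefficient $1$ from $\tfrac14 g\left(T(x,y),T(z,w)\right)$ with the coefficient $-\tfrac1{12}\cdot 4=-\tfrac13$ from the cyclic sum yields
\[
\tfrac13\bigl\{2\left(\n_x\eta\right)y\left(\n_z\eta\right)w-\left(\n_y\eta\right)z\left(\n_x\eta\right)w-\left(\n_z\eta\right)x\left(\n_y\eta\right)w\bigr\},
\]
which is precisely the last block of \eqref{KR7}. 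Together with $R(x,y,z,w)$ this reconstructs \eqref{KR7}, and the equivalence follows.
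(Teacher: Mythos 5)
Your proposal is correct and follows exactly the paper's route: the paper's own proof is the one-line remark ``Bearing in mind \eqref{KRsT} and \eqref{T7}, we obtain \eqref{KR7}'', and you have simply carried out that substitution in detail (the expansion of $g\left(T(x,y),T(z,w)\right)$ and the identity $\sx\left\{g\left(T(x,y),T(z,w)\right)\right\}=4\sx\left\{\left(\n_x\eta\right)y\left(\n_z\eta\right)w\right\}$ agree with the computations the paper records later in \eqref{sgTT}). All coefficients check out, so nothing further is needed.
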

\begin{proof}
Bearing in mind \eqref{KRsT} and \eqref{T7}, we obtain
\eqref{KR7}.
\end{proof}

We deduce from \propref{prop-KR7}, using \eqref{snf7}, the
following relation between
the Ricci tensors for $D$ and $\n$ on $\M\in\F_7$ with curvature
tensor of $\f$-K\"ahler type
\[
\rho^D(y,z)=\rho(y,z)-2g\left(\n_y \xi,\n_z
\xi\right)+\frac{1}{2}\eta(y)\eta(z)\nf.
\]

\begin{prop}\label{prop-Rf7}
Let the $\f$KT-connection $D$ have a curvature tensor of
$\f$-K\"ahler type on $\M\in\F_7$. The curvature tensor $R$ of the
Levi-Civita connection $\n$ has the following properties with
respect to the structure:
\begin{equation}\label{R7f}
\begin{split}
R(x,y,\f z,\f w) =&- R(x, y, z, w)\\
&
+\eta(y)\eta(z)g\left(\n_x \xi,\n_w \xi\right)%
-\eta(x)\eta(z)g\left(\n_y \xi,\n_w \xi\right)\\
&
+\eta(x)\eta(w)g\left(\n_y \xi,\n_z \xi\right)%
-\eta(y)\eta(w)g\left(\n_x \xi,\n_z \xi\right)\\
&+\frac{1}{3}\bigl\{%
\left(\n_x \eta\right)z\left(\n_y \eta\right)w%
-\left(\n_x \eta\right)w\left(\n_y \eta\right)z \\%
&\phantom{+\frac{1}{3}\bigl\{} %
+\left(\n_x\eta\right)\f z\left(\n_y \eta\right)\f w -\left(\n_x
\eta\right)\f w\left(\n_y \eta\right)\f z\bigr\},
\end{split}
\end{equation} %
\begin{equation}
\label{R7xi} %
R(x,y,z,\xi) =
\eta(x)g\left(\n_y \xi,\n_z \xi\right)%
-\eta(y)g\left(\n_x \xi,\n_z \xi\right). %
\end{equation} %
\end{prop}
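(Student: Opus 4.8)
The plan is to substitute the explicit form of $K$ furnished by \propref{prop-KR7} into the defining identity \eqref{1.12} of a $\f$-K\"ahler-type tensor and then solve for $R$. By hypothesis $K$ is of $\f$-K\"ahler type, so \eqref{KR7} holds and in addition $K(x,y,\f z,\f w)=-K(x,y,z,w)$. I would write \eqref{KR7} twice --- once in the arguments $(x,y,z,w)$ and once in $(x,y,\f z,\f w)$ --- add the two right-hand sides in accordance with the $\f$-K\"ahler relation, and isolate $R(x,y,\f z,\f w)$ against $R(x,y,z,w)$ together with the $\n\eta$- and $g(\n\xi,\n\xi)$-terms.

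Before grinding the substitution I would record the structural identities on an $\F_7$-manifold that make the bookkeeping collapse. First, $\eta\circ\f=0$ from \eqref{str}, so every summand of \eqref{KR7} carrying a factor $\eta(z)$ or $\eta(w)$ drops out once $z,w$ are replaced by $\f z,\f w$; this kills all four $g(\n\xi,\n\xi)$-terms in the $(x,y,\f z,\f w)$-copy. Second, since $\xi$ is Killing, \eqref{kil} gives the skew-symmetry $\left(\n_x\eta\right)y=-\left(\n_y\eta\right)x$, along with $\left(\n_x\eta\right)\xi=0$ and $\n_\xi\xi=0$. Third --- the identity I expect to be the crux --- I would combine the $\F_7$-condition \eqref{1.4a} with \eqref{1.3} and $\f\xi=0$ to get $F(x,y,\xi)=-F(\f x,\f y,\xi)$, and then, using $F(x,\f y,\xi)=\left(\n_x\eta\right)y$ together with $\f^2=-\Id+\eta\otimes\xi$, deduce $\left(\n_{\f x}\eta\right)y=\left(\n_x\eta\right)\f y$ and hence $\left(\n_{\f x}\eta\right)\f y=-\left(\n_x\eta\right)y$.

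With these in hand the remaining computation is mechanical. The third identity turns the leading term $2\left(\n_x\eta\right)y\left(\n_{\f z}\eta\right)\f w$ of the transformed copy into $-2\left(\n_x\eta\right)y\left(\n_z\eta\right)w$, which cancels against the matching term of the untransformed copy; the four surviving mixed products then reorganize, via the skew-symmetry, into the bracket displayed in \eqref{R7f}, while the four $g(\n\xi,\n\xi)$-terms of the untransformed copy, with signs reversed, reproduce the corresponding terms of \eqref{R7f}. The principal obstacle is purely organizational: one must keep the six bilinear $\n\eta$-products straight under the index permutations and the two $\f$-identities, since a single misplaced sign corrupts the final bracket. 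Finally, \eqref{R7xi} comes for free by specializing \eqref{R7f} to $w=\xi$: the left-hand side vanishes because $\f\xi=0$, every $\n\eta$-product dies since $\left(\n_\cdot\eta\right)\xi=0$ and $\f\xi=0$, the terms involving $\n_\xi\xi$ vanish, and $\eta(\xi)=1$ leaves precisely $R(x,y,z,\xi)=\eta(x)g\left(\n_y\xi,\n_z\xi\right)-\eta(y)g\left(\n_x\xi,\n_z\xi\right)$.
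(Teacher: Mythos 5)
Your proposal is correct and follows essentially the same route as the paper: substitute $\f z,\f w$ into \eqref{KR7}, invoke $K(x,y,\f z,\f w)=-K(x,y,z,w)$, and solve for the defect of property \eqref{1.12} for $R$; the auxiliary identities you isolate ($\eta\circ\f=0$, the Killing skew-symmetry of $\n\eta$, and $\left(\n_{\f x}\eta\right)\f y=-\left(\n_x\eta\right)y$ on $\F_7$) are exactly what makes the bookkeeping close. The only cosmetic difference is that the paper obtains \eqref{R7xi} directly from \eqref{KR7} via $K(x,y,z,\xi)=0$ (a consequence of $D\xi=0$), whereas you specialize \eqref{R7f} to $w=\xi$; both yield the same result.
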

\begin{proof}
Since $D\f=0$ then $K$ satisfies \eqref{1.12}, i.e. $K$ is a
$\f$-K\"ahler-type tensor. In \eqref{KR7} we substitute $\f z$ for
$z$ and $\f w$ for $w$ and add the obtained equality to
\eqref{KR7}. Then we obtain \eqref{R7f}, which gives the defect of
property \eqref{1.12} for $R$. Property \eqref{R7xi} follows from
\eqref{KR7} using $K(x,y,z,\xi)=0$, because of $D\xi=0$.
\end{proof}


Now, let us consider the $\f$KT-connection $D$ with $D$-parallel
torsion $T$, i.e. $DT=0$, on almost contact B-metric manifolds
$\M$ in the class $\F_7$.

According to \thmref{thm-K-sK=DT=0}, \eqref{KR-DT=0} and
\eqref{T7}, the following two statements hold.

\begin{prop}\label{prop-K7-DT=0}
Let the $\f$KT-connection $D$ have a $D$-parallel torsion tensor
$T$ on a $\F_7$-ma\-ni\-fold $\M$. Then the curvature tensor $K$
has the form
\begin{equation}\label{KR7-DT=0}
\begin{split}
&K(x,y,z,w) = R(x, y, z, w)\\%
&\phantom{K(x,y,z,w) =}
-\eta(y)\eta(z)g\left(\n_x \xi,\n_w \xi\right)%
+\eta(x)\eta(z)g\left(\n_y \xi,\n_w \xi\right)\\
&\phantom{K(x,y,z,w) =}
-\eta(x)\eta(w)g\left(\n_y \xi,\n_z \xi\right)%
 +\eta(y)\eta(w)g\left(\n_x \xi,\n_z \xi\right)\\
&+ 2 \left(\n_x \eta\right)y\left(\n_z \eta\right)w+\left(\n_y
\eta\right)z\left(\n_x \eta\right)w+\left(\n_z
\eta\right)x\left(\n_y \eta\right)w.
\end{split}
\end{equation}
\end{prop}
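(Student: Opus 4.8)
The plan is to start from the form of $K$ that is already available once the torsion is $D$-parallel. Since $DT=0$, equation \eqref{KR-DT=0} reads
\[
K(x,y,z,w)=R(x,y,z,w)+\tfrac{1}{4}g\left(T(x,y),T(z,w)\right)+\tfrac{1}{4}\sx\left\{g\left(T(x,y),T(z,w)\right)\right\},
\]
so the entire proof reduces to evaluating the two torsion-square terms by substituting the explicit expression \eqref{T7} for $T$ on an $\F_7$-manifold, and checking that the result assembles into the right-hand side of \eqref{KR7-DT=0}.

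First I would record the two structural facts that make the computation tractable. Because $\xi$ is unit, differentiating $g(\xi,\xi)=1$ gives $g(\n_x\xi,\xi)=\eta(\n_x\xi)=0$, so every vector $\n_x\xi$ is horizontal. Reading off \eqref{T7}, the field $T(x,y)$ has horizontal part $2\eta(x)\n_y\xi-2\eta(y)\n_x\xi$ and vertical part $2\left(\n_x\eta\right)y\,\xi$. Consequently, in $g\left(T(x,y),T(z,w)\right)$ the vertical parts pair only with each other, contributing $4\left(\n_x\eta\right)y\left(\n_z\eta\right)w$ via $g(\xi,\xi)=1$, while the horizontal parts pair only with each other; all mixed products vanish since $g(\n\xi,\xi)=0$. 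Multiplying by $\tfrac14$ produces a single term $P(x,y,z,w)$ built from the four $\eta\eta\,g(\n\xi,\n\xi)$ monomials that already appear in \eqref{KR7-DT=0}, together with the summand $\left(\n_x\eta\right)y\left(\n_z\eta\right)w$.

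It then remains to add the cyclic contribution, that is, to evaluate $2P(x,y,z,w)+P(y,z,x,w)+P(z,x,y,w)$, the factor two arising from the lone term plus the first summand of $\sx$. The $\left(\n\eta\right)\left(\n\eta\right)$ pieces combine at once into $2\left(\n_x\eta\right)y\left(\n_z\eta\right)w+\left(\n_y\eta\right)z\left(\n_x\eta\right)w+\left(\n_z\eta\right)x\left(\n_y\eta\right)w$, which is precisely the last line of \eqref{KR7-DT=0}. The main obstacle is the bookkeeping of the $\eta\eta\,g(\n\xi,\n\xi)$ monomials through the two cyclic permutations; here one exploits the symmetry $g(\n_a\xi,\n_b\xi)=g(\n_b\xi,\n_a\xi)$ to see that the four monomials whose $\eta$-pair is $(x,z)$, $(x,w)$, $(y,z)$ or $(y,w)$ survive with the coefficients $\pm1$ shown in the first two lines of \eqref{KR7-DT=0} (each receiving $\pm2$ from $2P$, partially cancelled by $\mp1$ from one permuted copy), whereas the two extra monomials whose $\eta$-pair is $(x,y)$ or $(z,w)$ occur with opposite signs in the two permuted copies and cancel. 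Collecting all surviving terms yields \eqref{KR7-DT=0}.
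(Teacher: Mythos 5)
Your proof is correct and follows essentially the same route as the paper: substitute the $\F_7$-expression \eqref{T7} for $T$ into \eqref{KR-DT=0}, expand $g\left(T(x,y),T(z,w)\right)$ via the horizontal/vertical splitting, and verify that in the cyclic sum the $\eta\eta\,g(\n\xi,\n\xi)$ monomials cancel so that only the $\left(\n\eta\right)\left(\n\eta\right)$ terms survive. Your bookkeeping of the surviving and cancelling monomials matches the paper's computation exactly.
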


\begin{proof}
The equality \eqref{KR7-DT=0} is a consequence of \eqref{KR-DT=0},
\eqref{T7} and the following calculations
\[
\begin{split}
g\left(T(x,y),T(z,w)\right)=&4\left\{\left(\n_x
\eta\right)y\left(\n_z \eta\right)w\right.\\
&\left.\phantom{\left\{\right.}
-\eta(y)\eta(z)g\left(\n_x \xi,\n_w \xi\right)%
+\eta(x)\eta(z)g\left(\n_y \xi,\n_w \xi\right)\right.\\
&\left.\phantom{\left\{\right.}
-\eta(x)\eta(w)g\left(\n_y \xi,\n_z \xi\right)%
 +\eta(y)\eta(w)g\left(\n_x \xi,\n_z \xi\right)
 \right\},
\end{split}
\]
\begin{equation}\label{sgTT}
\begin{split}
\sx
\left\{g\left(T(x,y),T(z,w)\right)\right\}&=4\sx\left\{\left(\n_x
\eta\right)y\left(\n_z \eta\right)w\right\}
\\%
&=\frac{1}{2}\left(\D\eta\wedge \D\eta\right)(x,y,z,w).
\end{split}
\end{equation}
\end{proof}
\begin{cor}\label{prop-KR7-sK=DT=0}
Let the $\f$KT-connection $D$ have a $D$-parallel torsion tensor
$T$ on $\M\in\F_7$. If the curvature tensor of $D$ is of
$\f$-K\"ahler type then
\begin{equation}\label{KR7-sK=DT=0}
\begin{split}
&K(x,y,z,w) = R(x, y, z, w)+ \left(\n_x \eta\right)y\left(\n_z
\eta\right)w
\\%
&\phantom{K(x,y,z,w) =}
-\eta(y)\eta(z)g\left(\n_x \xi,\n_w \xi\right)%
+\eta(x)\eta(z)g\left(\n_y \xi,\n_w \xi\right)\\
&\phantom{K(x,y,z,w) =}
-\eta(x)\eta(w)g\left(\n_y \xi,\n_z \xi\right)%
 +\eta(y)\eta(w)g\left(\n_x \xi,\n_z \xi\right).
\end{split}
\end{equation}
\end{cor}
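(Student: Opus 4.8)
The plan is to start from the expression \eqref{KR7-DT=0} for $K$ established in \propref{prop-K7-DT=0}, which is valid for any $\f$KT-connection with $D$-parallel torsion on a $\F_7$-manifold, and to show that the additional $\f$-K\"ahler hypothesis forces the three quadratic $\eta$-terms on its last line to collapse into a single term, turning \eqref{KR7-DT=0} into \eqref{KR7-sK=DT=0}. Since the two formulas agree in all the $R$- and $g(\n\xi,\n\xi)$-terms, the whole argument reduces to handling the combination $2\left(\n_x \eta\right)y\left(\n_z \eta\right)w + \left(\n_y \eta\right)z\left(\n_x \eta\right)w + \left(\n_z \eta\right)x\left(\n_y \eta\right)w$.

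First I would rewrite this combination, purely formally, as $\left(\n_x \eta\right)y\left(\n_z \eta\right)w + \sx\left\{\left(\n_x \eta\right)y\left(\n_z \eta\right)w\right\}$: one copy of the first summand is split off, and the remaining three summands are exactly the cyclic sum over $x$, $y$, $z$ of $\left(\n_x \eta\right)y\left(\n_z \eta\right)w$ (cycling $x\to y\to z\to x$ sends $\left(\n_x \eta\right)y\left(\n_z \eta\right)w$ to $\left(\n_y \eta\right)z\left(\n_x \eta\right)w$ and then to $\left(\n_z \eta\right)x\left(\n_y \eta\right)w$). No use of the Killing antisymmetry of $\left(\n_x\eta\right)y$ is needed for this step.

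Next I would invoke the hypothesis that $K$ is of $\f$-K\"ahler type. Because $DT=0$, \thmref{thm-K-sK=DT=0} applies, so its condition (iii), the equation \eqref{sgTT=0}, holds, i.e. $\sx\left\{g\left(T(x,y),T(z,w)\right)\right\}=0$. By the identity \eqref{sgTT} this cyclic sum equals $4\sx\left\{\left(\n_x \eta\right)y\left(\n_z \eta\right)w\right\}$, whence $\sx\left\{\left(\n_x \eta\right)y\left(\n_z \eta\right)w\right\}=0$. Substituting this back into the regrouped expression leaves only $\left(\n_x \eta\right)y\left(\n_z \eta\right)w$, which converts \eqref{KR7-DT=0} into precisely \eqref{KR7-sK=DT=0}. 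The only substantive move is recognizing that the three quadratic terms decompose as one free term plus the cyclic sum that is controlled by \eqref{sgTT}; once this is seen the conclusion is immediate, so I do not anticipate any genuine obstacle.
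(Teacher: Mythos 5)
Your argument is correct and follows exactly the route of the paper's own (very terse) proof: start from \eqref{KR7-DT=0}, split the three quadratic $\eta$-terms as one free term plus the cyclic sum $\sx\bigl\{\left(\n_x\eta\right)y\left(\n_z\eta\right)w\bigr\}$, and kill that cyclic sum via \eqref{sgTT} together with condition \eqref{sgTT=0} supplied by \thmref{thm-K-sK=DT=0}. Nothing is missing.
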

\begin{proof}
Expression \eqref{KR7-sK=DT=0} follows from \eqref{KR7-DT=0},
\eqref{sgTT} and \eqref{sgTT=0}.
\end{proof}

The following statement is obtained from \propref{prop-Rf37} using
\eqref{T7}.
\begin{cor}
Let the $\f$KT-connection $D$ have a curvature tensor of
$\f$-K\"ahler type and a $D$-parallel torsion tensor $T$ on
$\M\in\F_7$. The curvature tensor $R$ has property \eqref{R7xi}
and the following property:
\[
\begin{split}
R(x,y,\f z,\f w)=&- R(x, y, z, w)\\%
&+\eta(y)\eta(z)g\left(\n_x \xi,\n_w \xi\right)%
-\eta(x)\eta(z)g\left(\n_y \xi,\n_w \xi\right)\\
&
+\eta(x)\eta(w)g\left(\n_y \xi,\n_z \xi\right)%
-\eta(y)\eta(w)g\left(\n_x \xi,\n_z \xi\right).
\end{split}
\]
\end{cor}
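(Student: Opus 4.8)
The plan is to specialize \propref{prop-Rf37} to the vertical component $\F_7$. Since $\F_7$ is a subclass of $\F_3\oplus\F_7$ and we are assuming a $D$-parallel torsion $T$ together with a curvature tensor $K$ of $\f$-K\"ahler type, both hypotheses of \propref{prop-Rf37} are satisfied; hence its two displayed identities hold verbatim, now with $T$ given explicitly by \eqref{T7}. The whole task therefore reduces to inserting \eqref{T7} into those two identities and simplifying.

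For the property \eqref{R7xi} I would start from $R(x,y,z,\xi)=\frac{1}{2}g\bigl(T(x,y),\n_z\xi\bigr)$. Substituting \eqref{T7} produces the two horizontal terms $\eta(x)g(\n_y\xi,\n_z\xi)-\eta(y)g(\n_x\xi,\n_z\xi)$ plus a vertical term proportional to $g(\xi,\n_z\xi)$. Differentiating $g(\xi,\xi)=\eta(\xi)=1$ (from \eqref{str}) gives $g(\xi,\n_z\xi)=0$, so the vertical term drops out and \eqref{R7xi} follows at once.

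For the first property I must evaluate $g\bigl(T(x,y),T(z,w)+T(\f z,\f w)\bigr)$ with $T$ as in \eqref{T7}. I would split each torsion value into its horizontal part (the $\n_\cdot\xi$-terms, orthogonal to $\xi$ by the computation above) and its vertical part (the $\xi$-component). Because $\eta\circ\f=0$ by \eqref{str}, the term $T(\f z,\f w)$ collapses to the purely vertical $2\left(\n_{\f z}\eta\right)\f w\,.\,\xi$. The horizontal-horizontal pairing then reproduces exactly the four terms $\eta(y)\eta(z)g(\n_x\xi,\n_w\xi)-\cdots$ appearing in the statement, the cross terms vanish by orthogonality, and the vertical-vertical pairing contributes a single summand $(\n_x\eta)y\bigl[(\n_z\eta)w+(\n_{\f z}\eta)\f w\bigr]$.

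The one genuinely non-routine step is to show that this last vertical contribution vanishes, i.e. $(\n_z\eta)w+(\n_{\f z}\eta)\f w=0$ on $\F_7$. I would rewrite both summands through $F$ via the identity $(\n_a\eta)b=F(a,\f b,\xi)$, turning them into $F(z,\f w,\xi)$ and $F(\f z,\f(\f w),\xi)$. Applying the defining relation \eqref{1.4a} of $\F_7$ to $F(z,\f w,\xi)$ and discarding the term containing $\f\xi=0$ yields $F(z,\f w,\xi)=-F(\f z,\f(\f w),\xi)$, which is exactly the negative of the second summand; hence the sum is zero. Feeding this back, the factor $-\frac{1}{4}$ in \propref{prop-Rf37} together with the coefficient $4$ coming from \eqref{T7} converts the horizontal pairing into the signs displayed in the statement, which completes the proof.
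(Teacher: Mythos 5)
Your proof is correct and follows exactly the paper's route: the paper derives this corollary in one line as ``obtained from \propref{prop-Rf37} using \eqref{T7}'', and you carry out precisely that substitution, correctly supplying the details the paper leaves implicit (orthogonality $g(\xi,\n_z\xi)=0$, the collapse of $T(\f z,\f w)$ to its vertical part, and the cancellation $\left(\n_z\eta\right)w+\left(\n_{\f z}\eta\right)\f w=0$ via \eqref{1.4a}).
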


\section{A Lie group as a 5-dimensional $\F_7$-manifold}\label{sec:5}

Let $G$ be a 5-dimensional real connected Lie group and let
$\mathfrak{g}$ be its Lie algebra. If $\left\{E_i\right\}$
$(i=1,2,\dots,5)$ is a global basis of left-invariant vector
fields of $G$, we define an invariant almost contact structure
$(\f,\xi,\eta)$ and a left invariant B-metric $g$ on $G$ as
follows:
\begin{equation}\label{f}
\begin{array}{c}
\f E_1 = E_3,\quad \f E_2 = E_4,\quad \f E_3 =-E_1,\quad \f E_4 =
-E_2,\quad \f E_5 =0;\\%
\xi=E_5;\qquad \eta(E_i)=0\; (i=1,2,3,4),\quad \eta(E_5)=1;
\end{array}
\end{equation}
\begin{equation}\label{g}
\begin{split}
g(E_1,E_1)= & g(E_2,E_2)=-g(E_3,E_3)=-g(E_4,E_4)=g(E_5,E_5)=1,
\\%
&g(E_i,E_j)=0,\; i\neq j,\quad  i,j\in\{1,2,3,4,5\}.
\end{split}
\end{equation}

Thus, because of \eqref{str}, the induced 5-dimensional manifold
$\Lf$ is an almost contact B-metric manifold.

By analogy with the non-Abelian almost complex structure on an
even-dimensio\-nal Lie group we introduce the following notion. An
almost contact structure 
on a Lie group $G$ is called \emph{non-Abelian} if the following
property holds with respect to the Lie algebra $\mathfrak{g}$
\begin{equation}\label{case}
[\f X,\f Y]=-[X,Y].
\end{equation}

Obviously, the last equation implies $[\xi,Y]=0$ which is a
sufficient condition the vector field $\xi$ to be Killing with
respect to $g$.

\begin{prop}\label{prop-F37}
Let $\Lf$ be an almost contact B-metric manifold with a
non-Abelian almost contact structure on the Lie group $G$. Then
the mani\-fold  $\Lf$ belongs to the class $\F_3\oplus\F_7$.
\end{prop}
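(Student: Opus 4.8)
The plan is to apply \lemref{37}, which characterizes $\F_3\oplus\F_7$ as those almost contact B-metric manifolds for which $\xi$ is Killing and the cyclic sum $\sx F$ vanishes. The Killing property is already in hand: putting $X=\xi$ in \eqref{case} and using $\f\xi=0$ gives $[\xi,Y]=0$, whence $\xi$ is Killing by \eqref{kil}. Thus the entire task reduces to proving $\sx F(X,Y,Z)=0$, and since $F$, $\f$ and $g$ are all left-invariant while membership in a class is a pointwise tensorial condition, it suffices to check this on the left-invariant fields $E_i$.

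First I would record two algebraic facts. From the last relation in \eqref{str} one gets that $\f$ is self-adjoint for $g$, namely $g(\f X,Y)=g(X,\f Y)$; this is the B-metric analogue of skew-symmetry and will let me move $\f$ across $g$. Secondly, replacing $X$ by $\f X$ in \eqref{case} and using $\f^2=-\Id+\eta\otimes\xi$ together with the already established $[\xi,\cdot]=0$ yields the key identity $[\f X,Y]=[X,\f Y]$, valid for all left-invariant $X,Y$.

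Next I would compute $F$ via the Koszul formula. Because $g$ is left-invariant, the directional-derivative terms drop out and $2g(\n_X Y,Z)=g([X,Y],Z)-g([Y,Z],X)+g([Z,X],Y)$ for left-invariant fields. Writing $F(X,Y,Z)=g((\n_X\f)Y,Z)=g(\n_X\f Y,Z)-g(\n_X Y,\f Z)$ and applying Koszul to each summand, self-adjointness of $\f$ turns $2F(X,Y,Z)$ into a sum of six bracket-pairings. Two of these have the form $g(\f[\cdot,\cdot],\cdot)$, while the remaining four carry $\f$ on one argument inside the bracket; I would group them accordingly.

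Taking $\sx$, the two pairings of the form $g(\f[\cdot,\cdot],\cdot)$ cancel against each other in the cyclic sum, while the four pairings with $\f$ interior to the bracket cancel after a single application of the identity $[\f X,Y]=[X,\f Y]$. Hence $\sx F=0$, and together with the Killing property this places $\Lf$ in $\F_3\oplus\F_7$ by \lemref{37}. I expect the main obstacle to be purely bookkeeping: correctly expanding the six-term expression for $2F$ and matching up the cyclically permuted contributions so that every term is paired; the conceptual content lies only in deriving $[\f X,Y]=[X,\f Y]$ from the non-Abelian hypothesis \eqref{case}.
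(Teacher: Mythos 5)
Your proposal is correct and follows essentially the same route as the paper: both reduce the claim to Lemma~\ref{37}, obtain the Killing property of $\xi$ from $[\xi,\cdot]=0$, derive the key identity $[\f X,Y]=[X,\f Y]$ from the non-Abelian condition \eqref{case}, and expand $2F$ via the Koszul formula \eqref{LC} into bracket terms whose cyclic sum cancels. The only cosmetic difference is that the paper records the intermediate formula \eqref{2F} explicitly, while you describe the same six-term cancellation in words.
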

\begin{proof}
The known property of the Levi-Civita connection $\n$ of  $g$ on
$\Lf$
\begin{equation}\label{LC}
2g(\n_XY,Z)=g([X,Y],Z)+g([Z,X],Y)+g([Z,Y],X)
\end{equation}
and the equivalent condition $[\f X,Y]=[X,\f Y]$ of \eqref{case}
imply
\begin{equation}\label{2F}
2F(X,Y,Z)=g\bigl([X,\f Y]-\f[X,Y],Z\bigr)+g\bigl([X,\f
Z]-\f[X,Z],Y\bigr).
\end{equation}
Therefore, using \eqref{case}, we obtain
\[
\sx F(X,Y,Z)=\sx g\bigl([X,\f Y]-[\f X,Y],Z\bigr)=0.
\]
Hence and since $\xi$ is Killing, because of \eqref{case},  we
establish that $\Lf$ belongs to $\F_3\oplus\F_7$.
\end{proof}

\begin{cor}\label{cor-370}
Let $\Lf$ be an almost contact B-metric manifold with non-Abelian
almost contact structure on the Lie group $G$. Then the necessary
and sufficient conditions $\Lf$ to belong to the component classes
of $\F_3\oplus\F_7$ are respectively:
\[
\begin{array}{c}
\F_3: \eta\left([X,Y]\right)=0;\;\;\; \F_7: \f[\f
X,Y]=\f^2[X,Y];\;\; \F_0: [X,Y]=-\f[\f X,Y].
\end{array}
\]
\end{cor}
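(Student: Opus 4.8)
The final statement is Corollary~\ref{cor-370}, which characterizes when a non-Abelian almost contact B-metric structure on a Lie group belongs to each of the three component classes $\F_3$, $\F_7$, and $\F_0$ inside $\F_3\oplus\F_7$. Since Proposition~\ref{prop-F37} already places $\Lf$ in $\F_3\oplus\F_7$, the plan is to read off the defining conditions of each subclass from the explicit formula \eqref{2F} for $F$ and translate them into bracket identities on $\mathfrak{g}$. The main computational input is the closed expression
\[
2F(X,Y,Z)=g\bigl([X,\f Y]-\f[X,Y],Z\bigr)+g\bigl([X,\f Z]-\f[X,Z],Y\bigr),
\]
together with the non-Abelian condition \eqref{case} (equivalently $[\f X,Y]=[X,\f Y]$) and the algebraic relations \eqref{str} governing $\f,\xi,\eta,g$.

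For the $\F_3$ case I would use the characterization \eqref{1.4}, whose distinguishing feature relative to $\F_7$ is the vanishing of the vertical part, i.e. $F(X,Y,\xi)=0$ (equivalently $\n\eta=0$, so $\xi$ is $\n$-parallel). The plan is to compute $F(X,Y,\xi)$ from \eqref{2F} by setting $Z=\xi$: using $\f\xi=0$ and $\eta(\xi)=1$, the expression collapses, and the surviving term should be proportional to $g(\f[X,Y],\xi)$ or to $\eta([X,Y])$ after applying \eqref{str}. I expect this to reduce precisely to the stated condition $\eta([X,Y])=0$. For the $\F_7$ case I would instead impose the vanishing of the horizontal component, which by the projector decomposition recalled before Lemma~2 means $h(F)=0$; concretely this is the condition that the $\F_3$-part dies, and substituting the structure relations into \eqref{2F} on horizontal vectors should yield the bracket identity $\f[\f X,Y]=\f^2[X,Y]$. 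The $\F_0$ condition is then obtained by requiring $F\equiv0$ identically, i.e. both components vanish simultaneously, which forces $[X,Y]=-\f[\f X,Y]$.

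The key technical step in each case is substituting the structure equations \eqref{str} and the non-Abelian identity \eqref{case} into \eqref{2F} and simplifying. The main obstacle I anticipate is bookkeeping with the $\eta\otimes\xi$ correction terms in $\f^2=-\Id+\eta\otimes\xi$ and in $g(\f x,\f y)=-g(x,y)+\eta(x)\eta(y)$: these produce vertical contributions that must be tracked carefully, since it is exactly the presence or absence of these $\eta$-terms that separates $\F_3$ from $\F_7$. Once $F(X,Y,\xi)$ is computed cleanly, the $\F_3$ condition follows at once, and the $\F_7$ and $\F_0$ conditions should then drop out by combining the horizontal-vanishing requirement with \eqref{case}, using that on a $\F_3\oplus\F_7$-manifold the full tensor $F$ is already the sum of just these two components. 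I would present the three equivalences in the single display given in the statement, deriving each from the relevant specialization of \eqref{2F}.
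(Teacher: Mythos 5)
Your proposal is correct and follows essentially the same route as the paper: the author's proof is precisely to combine Proposition~\ref{prop-F37} with formula \eqref{2F}, impose the defining conditions \eqref{1.4} and \eqref{1.4a} (equivalently, the vanishing of the vertical, resp.\ horizontal, component of $F$), and use $\F_0=\F_3\cap\F_7$. The only computational detail worth spelling out is that $2F(X,Y,\xi)=\eta([X,\f Y])$ (the term $g(\f[X,Y],\xi)$ vanishes identically since $\eta\circ\f=0$), so passing to the stated condition $\eta([X,Y])=0$ uses $[\xi,\cdot]=0$ and the substitution $Y\mapsto\f Y$.
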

\begin{proof}
From \propref{prop-F37} and \eqref{2F}, using \eqref{1.4},
\eqref{1.4a} and $\F_0=\F_3\cap\F_7$, we obtain the above
conditions.
\end{proof}

Now, let us consider the Lie algebra $\mathfrak{g}$ on $G$,
determined by the following non-zero commutators:
\begin{equation}\label{com2}
\begin{split}
&[E_1,E_2]=-[E_3,E_4]=-\lm_1E_1-\lm_2E_2+\lm_3E_3+\lm_4E_4+2\mu_1E_5,
\\%
&[E_1,E_4]=-[E_2,E_3]=-\lm_3E_1-\lm_4E_2-\lm_1E_3-\lm_2E_4+2\mu_2E_5,
\end{split}
\end{equation}
where $\lm_i, \mu_j \in \R$ $(i=1,2,3,4; j=1,2)$
. 

Then we prove the following
\begin{thm}
Let $(G,\f,\xi,\eta,g)$ be the almost contact B-metric manifold,
determined by \eqref{f}, \eqref{g} and \eqref{com2}. Then it
belongs to the class $\F_7$.
\end{thm}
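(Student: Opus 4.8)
The plan is to reduce everything to \propref{prop-F37} and \corref{cor-370}, which together turn the class determination into two bracket computations on the basis $\{E_i\}$. First I would verify that the almost contact structure \eqref{f} is non-Abelian, i.e. that \eqref{case}, $[\f X,\f Y]=-[X,Y]$, holds. Both sides are bilinear and antisymmetric, so it suffices to test the pairs $(E_i,E_j)$ with $i<j$. The pairs containing $E_5$ are immediate, since $\f E_5=0$ and $E_5=\xi$ is central for \eqref{com2} (so both sides vanish), while for the rest one reads off \eqref{com2} directly, e.g. $[\f E_1,\f E_2]=[E_3,E_4]=-[E_1,E_2]$ and $[\f E_1,\f E_4]=[E_3,-E_2]=[E_2,E_3]=-[E_1,E_4]$, the remaining cases being analogous. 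Hence \eqref{case} holds and \propref{prop-F37} yields $\Lf\in\F_3\oplus\F_7$.

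Second, to single out the $\F_7$ component I would check the criterion $\f[\f X,Y]=\f^2[X,Y]$ of \corref{cor-370}. Writing $A:=[E_1,E_2]$ and $B:=[E_1,E_4]$ for the two independent brackets of \eqref{com2}, the pairs involving $E_5$ again give $0=0$. For the four nonzero brackets among $E_1,\dots,E_4$ the verification hinges on the fact that the rotated bracket $[\f E_i,E_j]$ is again, up to sign, one of $A,B$, and that the coefficients $\lm_1,\dots,\lm_4$ are distributed so that applying $\f$ to it reproduces $\f^2[E_i,E_j]$. Concretely, each $[E_i,E_j]$ carries a vertical term $2\mu_1E_5$ or $2\mu_2E_5$ which $\f^2=-\Id+\eta\otimes\xi$ renders horizontal, while $\f E_5=0$ removes it on the left-hand side, so both sides are purely horizontal; on the horizontal part the placement of the $\lm_i$ is exactly what makes them agree. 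For instance $\f[\f E_1,E_2]=\f[E_3,E_2]=\f B=\lm_1E_1+\lm_2E_2-\lm_3E_3-\lm_4E_4=\f^2 A=\f^2[E_1,E_2]$, and the other three brackets go the same way. This establishes the $\F_7$ condition identically in the parameters, and since that condition is precisely what distinguishes $\F_7$ inside $\F_3\oplus\F_7$ (\corref{cor-370}), we conclude $\Lf\in\F_7$.

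The bracket bookkeeping is routine; the one genuine point is the structural matching in the second step, namely recognizing that the specific arrangement of the $\lm_i$ across the $E_1,\dots,E_4$ slots of $A$ and $B$ forces $\f$ to carry the horizontal part of one bracket onto that of the other, which is what makes the $\F_3$-part vanish and leaves a pure $\F_7$-structure. I would also note in passing that the commutators \eqref{com2} satisfy the Jacobi identity (a short check on triples, the central $E_5$-terms dropping out), so that $\mathfrak{g}$ is a genuine Lie algebra and $\Lf$ is well defined; this is presupposed by the statement rather than part of the class determination.
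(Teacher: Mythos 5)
Your proof is correct and takes essentially the same route as the paper, which likewise establishes the result by checking directly that the commutators \eqref{com2} satisfy the $\F_7$ condition of \corref{cor-370}, with the non-Abelian property \eqref{case} as the (implicit) prerequisite for invoking that corollary. You merely spell out the bracket computations and the verification of \eqref{case} that the paper leaves to the reader.
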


\begin{proof}
We check directly that the commutators in \eqref{com2} satisfy the
condition for $\F_7$ from \corref{cor-370}. Therefore, we
establish that the corresponding manifold $\Lf$ belongs to the
class $\F_7$. It is not a $\F_0$-manifold if and only if
$\mu_1^2+\mu_2^2\neq 0$ holds.
\end{proof}

Using \eqref{LC} and \eqref{com2}, we obtain the components of the
Levi-Civita connection on the manifold:
\begin{equation}\label{nabli}
\begin{array}{l}
\n_{E_1}E_1=-\n_{E_3}E_3=\lm_1E_2-\lm_3E_4, \\%
\n_{E_1}E_2=-\n_{E_3}E_4=-\lm_1E_1+\lm_3E_3+\mu_1E_5, \\%
\n_{E_1}E_3=\n_{E_3}E_1=\lm_3E_2+\lm_1E_4, \\%
\n_{E_1}E_4=\n_{E_3}E_2=-\lm_3E_1-\lm_1E_3+\mu_2E_5, \\%
\n_{E_2}E_1=-\n_{E_4}E_3=\lm_2E_2-\lm_4E_4-\mu_1E_5, \\%
\n_{E_2}E_2=-\n_{E_4}E_4=-\lm_2E_1+\lm_4E_3, \\%
\n_{E_2}E_3=\n_{E_4}E_1=\lm_4E_2+\lm_2E_4-\mu_2E_5,\\%
\n_{E_2}E_4=\n_{E_4}E_2=-\lm_4E_1-\lm_2E_3,\\%
\n_{E_1}E_5 =\n_{E_5}E_1 =-\mu_1E_2+\mu_2E_4, \\%
\n_{E_2}E_5 =\n_{E_5}E_2 =\mu_1E_1-\mu_2E_3,\\%
\n_{E_3}E_5=\n_{E_5 }E_3=-\mu_2E_2-\mu_1E_4,\\%
\n_{E_4}E_5=\n_{E_5 }E_4=\mu_2E_1+\mu_1E_3, \\%
\n_{E_5}E_5 =0.
\end{array}
\end{equation}

Let us consider the $\f$KT-connection $D$ on $\Lf$ defined by
\eqref{1}, \eqref{TQ} and \eqref{T7}. Then, by \eqref{nabli} we
compute its components as follows:
\begin{equation}\label{F7D}
\begin{array}{ll}
D_{E_1}E_1=-D_{E_3}E_3=\lm_1E_2-\lm_3E_4, \; & 
D_{E_1}E_2=-D_{E_3}E_4=-\lm_1E_1+\lm_3E_3,\\%
D_{E_1}E_3=D_{E_3}E_1=\lm_3E_2+\lm_1E_4, \; & 
D_{E_1}E_4=D_{E_3}E_2=-\lm_3E_1-\lm_1E_3, \\%
D_{E_2}E_1=-D_{E_4}E_3=\lm_2E_2-\lm_4E_4, \; & 
D_{E_2}E_2=-D_{E_4}E_4=-\lm_2E_1+\lm_4E_3, \\%
D_{E_2}E_3=D_{E_4}E_1=\lm_4E_2+\lm_2E_4,\; & 
D_{E_2}E_4=D_{E_4}E_2=-\lm_4E_1-\lm_2E_3, \\%
D_{E_5}E_1 =-2\mu_1E_2+2\mu_2E_4, \; & D_{E_5}E_2 =2\mu_1E_1-2\mu_2E_3,\\%
D_{E_5}E_3=-2\mu_2E_2-2\mu_1E_4, \; &
D_{E_5}E_4=2\mu_2E_1+2\mu_1E_3, \\%
D_{E_i}E_5=0,\ (i=1,2,3,4,5).
\end{array}
\end{equation}
The basic non-zero components of the torsion of $D$ are only:
\[
\begin{array}{l}
T(E_1,E_2,E_5)=T(E_3,E_4,E_5)=2\mu_1,\\%
T(E_2,E_3,E_5)=T(E_4,E_1,E_5)=2\mu_2.
\end{array}
\]
Hence, using \eqref{F7D}, we obtain that the corresponding
components of the covariant derivative of $T$ with respect to $D$
are zero. Thus, we prove the following
\begin{prop}
The $\f$KT-connection $D$ with components \eqref{F7D} on the
$\F_7$-manifold $\Lf$ has a $D$-parallel torsion $T$.
\end{prop}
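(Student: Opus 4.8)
The plan is to verify directly that every component $(D_{E_i}T)(E_j,E_k,E_l)$ vanishes, $i,j,k,l\in\{1,\dots,5\}$. Since $T$ and $D$ are both left-invariant, each scalar $T(E_j,E_k,E_l)$ is constant along $G$, so the derivation term disappears and
\[
(D_{E_i}T)(E_j,E_k,E_l)=-T(D_{E_i}E_j,E_k,E_l)-T(E_j,D_{E_i}E_k,E_l)-T(E_j,E_k,D_{E_i}E_l).
\]
Everything is thus reduced to an algebraic identity in $\lm_1,\dots,\lm_4,\mu_1,\mu_2$, with each $D_{E_i}E_j$ read off from \eqref{F7D} and the components of $T$ obtained from \eqref{T7} and \eqref{nabli}.

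First I would isolate two structural features of \eqref{F7D}: the vertical field is $D$-parallel, $D_{E_i}E_5=0$ for all $i$ (i.e. $D\xi=0$), and every $D_{E_i}E_j$ with $j\le4$ stays horizontal, lying in $\mathrm{span}\{E_1,E_2,E_3,E_4\}$. On the torsion side, $T$ is a $3$-form all of whose nonzero components carry exactly one factor $\xi=E_5$. Combining these: if none of $j,k,l$ equals $5$, all three terms above have three horizontal arguments and vanish; if two or three equal $5$, they vanish by skew-symmetry of $T$ or because $D_{E_i}E_5=0$. So the only genuine case has exactly one vertical slot, say $l=5$ (the rest follow by skew-symmetry), where the third term drops and it remains to prove
\[
T(D_{E_i}E_j,E_k,\xi)+T(E_j,D_{E_i}E_k,\xi)=0 .
\]

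This identity says exactly that the horizontal skew-symmetric $2$-form $t(x,y):=T(x,y,\xi)=\D\eta(x,y)$ is $D$-parallel, which is the computational core. I would tabulate $t$ from \eqref{T7} and \eqref{nabli} -- it has a $\mu_1$-block on the pairs $(E_1,E_2),(E_3,E_4)$ and a $\mu_2$-block on $(E_2,E_3),(E_4,E_1)$, all remaining horizontal entries being zero -- and then run $i$ over $\{1,\dots,5\}$ and $j,k$ over $\{1,\dots,4\}$. For $i\le4$ each $D_{E_i}E_j$ is $\lm$-linear and horizontal, so the two surviving terms are products of a $\lm$ with a $\mu$-entry of $t$, and the mixed $\lm\mu$ contributions pair off between the two blocks and cancel; for $i=5$ one substitutes $D_{E_5}E_j=2\n_{E_j}\xi$ from \eqref{F7D}, so the two terms are quadratic in $\mu$ and cancel by the same block structure.

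A tidier way to package this same verification, which I would prefer in order to avoid a long case list, uses the $(1,1)$-tensor $S x:=2\n_x\xi$. Since $Dg=D\eta=D\xi=0$ and \eqref{T7} gives $T(x,y)=\eta(x)Sy-\eta(y)Sx+g(Sx,y)\xi$, one checks readily that $DT=0$ is equivalent to $DS=0$ (using $S\xi=2\n_\xi\xi=0$, whence $(D_aS)\xi=0$). Reading \eqref{F7D} one sees $SE_j=D_{E_5}E_j$, so $DS=0$ amounts to $D_{E_i}(SE_j)=S(D_{E_i}E_j)$ on the basis, which is the compact form of the cancellations above. The only real difficulty is the bookkeeping: one must confirm that the $\lm\mu$- and $\mu^2$-terms cancel for every admissible triple, not merely the diagonal ones, which they do by the relative signs of the two $\mu$-blocks forced by the $\F_7$ condition.
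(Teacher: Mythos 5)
Your proof is correct and follows essentially the same route as the paper, which simply asserts that the components of $DT$ computed from \eqref{F7D} and the torsion table vanish; you carry out that same direct verification, merely organizing it (reduction to the terms with exactly one $\xi$-slot, equivalently to $D$-parallelism of the horizontal $2$-form $\D\eta$, or of $Sx=2\n_x\xi$) where the paper leaves the bookkeeping implicit. The cancellations you describe do occur with the signs forced by \eqref{nabli} and \eqref{F7D}, so there is no gap.
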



According to \eqref{g} and \eqref{nabli}, the curvature tensor $R$
has the following non-zero components
$R_{ijkl}=R(E_{i},E_{j},E_{k},E_{l})$:
\begin{equation}\label{Rijkl}
\begin{split}
R_{1212}&=R_{3434}
=\left(\lm_1^2+\lm_2^2-\lm_3^2-\lm_4^2\right)+3\mu_1^2,
\\%
R_{1234}&=R_{3412}
=-\left(\lm_1^2+\lm_2^2-\lm_3^2-\lm_4^2\right)-2\mu_1^2+\mu_2^2,
\\%
R_{1414}&=R_{2323}
=-\left(\lm_1^2+\lm_2^2-\lm_3^2-\lm_4^2\right)+3\mu_2^2,
\\%
R_{1423}&=R_{2314}
=\left(\lm_1^2+\lm_2^2-\lm_3^2-\lm_4^2\right)+\mu_1^2-2\mu_2^2,
\\%
R_{1214}&=-R_{1223}=-R_{2312}=R_{2334}=R_{1412}=-R_{1434}\\%
&=-R_{3414}=R_{3423}
=2\left(\lm_1\lm_3+\lm_2\lm_4\right)+3\mu_1\mu_2,\\%
R_{1324}&=R_{2413}=-\left(\mu_1^2+\mu_2^2\right),\qquad R_{1535}=R_{2545}=-2\mu_1\mu_2,\\%
R_{1515}&=R_{2525}=-R_{3535}=-R_{4545}=-\mu_1^2+\mu_2^2,
\end{split}
\end{equation}
and the remaining ones are determined by \eqref{Rijkl} and
properties \eqref{1.10} and \eqref{1.10B}.

Next we obtain the following basic components
$K_{ijkl}=K(E_{i},E_{j},E_{k},E_{l})$ of the curvature tensor $K$
of $D$, using \eqref{F7D}:
\begin{equation}\label{Kijkl}
\begin{split}
K_{1212}&=-K_{1234}=-K_{3412}=K_{3434}
=\left(\lm_1^2+\lm_2^2-\lm_3^2-\lm_4^2\right)+4\mu_1^2,
\\%
K_{1414}&=-K_{1423}=-K_{2314}=K_{2323}
=-\left(\lm_1^2+\lm_2^2-\lm_3^2-\lm_4^2\right)+4\mu_2^2,
\\%
K_{1214}&=-K_{1223}=-K_{2312}=K_{2334}=K_{1412}=-K_{1434}\\%
&=-K_{3414}=K_{3423}
=2\left(\lm_1\lm_3+\lm_2\lm_4\right)+4\mu_1\mu_2.
\end{split}
\end{equation}

Hence, we establish immediately that $K$ is a $\f$-K\"ahler-type
curvature tensor if and only if $\mu_1=\mu_2=0$, which is
equivalent to the trivial case when $\Lf$ is a $\F_0$-manifold.
Bearing in mind \thmref{thm-K-sK=DT=0}, we establish that $\Lf$
has a \emph{weak $\f$KT-structure} (i.e. $\D T\neq 0$).

Using \eqref{Rijkl} and \eqref{Kijkl}, we compute the
corresponding components of the Ricci tensor $\rho$ and  $\rho^D$
and the values of the scalar curvatures $\tau$ and $\tau^D$ for
the connections $\n$ and $D$, respectively:
\begin{equation}\label{rho}
\begin{split}
&\rho_{11}=\rho_{22}=-\rho_{33}=-\rho_{44}=
-2(\lm_1^2+\lm_2^2-\lm_3^2-\lm_4^2)-2(\mu_1^2-\mu_2^2),\\%
&\rho_{13}=\rho_{24}=-4\left(\lm_1\lm_3+\lm_2\lm_4\right)-4\mu_1\mu_2,
\qquad
\rho_{55}=4(\mu_1^2-\mu_2^2), \\%
 &\rho_{11}^D=\rho_{22}^D=-\rho_{33}^D=-\rho_{44}^D=
-2(\lm_1^2+\lm_2^2-\lm_3^2-\lm_4^2)-4(\mu_1^2-\mu_2^2),\\%
&\rho_{13}^D=\rho_{24}^D=-4\left(\lm_1\lm_3+\lm_2\lm_4\right)-8\mu_1\mu_2;
\end{split}
\end{equation}

\[
\begin{split}
&\tau= -8(\lm_1^2+\lm_2^2-\lm_3^2-\lm_4^2)-4(\mu_1^2-\mu_2^2), \\%
&\tau^D=-8(\lm_1^2+\lm_2^2-\lm_3^2-\lm_4^2)-16(\mu_1^2-\mu_2^2).
\end{split}
\]


Bearing in mind the value of the square norm of $\n \f$ on $\Lf$
\[
\nf=-8\left(\mu_1^2-\mu_2^2\right),
\]
we obtain that the constructed manifold is an
isotropic-$\F_0$-manifold if and only if $\mu_1=\pm\mu_2$.

Finally, we summarize the latter characteristics in the following
\begin{prop}
Let $\Lf$ be the $\F_7$-manifold  determined by \eqref{f},
\eqref{g} and \eqref{com2} and $D$ be the $\f$KT-connection
defined by \eqref{T7}. The following conditions are equivalent:
\begin{enumerate}[(i)]
    \item The manifold $\Lf$ is an isotropic-$\F_0$-manifold;
    \item The scalar curvatures for $\n$ and $D$ are equal;
    \item The vectors $\n_{E_i}\xi$ $(i=1,2,3,4)$ are isotropic;
    \item The equality $\mu_1=\pm\mu_2$ is valid.
\end{enumerate}
\end{prop}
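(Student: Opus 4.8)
The plan is to observe that every one of the four conditions collapses to the single scalar equation $\mu_1^2-\mu_2^2=0$ (equivalently $\mu_1=\pm\mu_2$), and then to prove the stated equivalence by showing each of (i), (ii), (iii) separately equivalent to (iv). Since all the relevant invariants of $\Lf$ have already been computed explicitly in terms of the structure constants $\lm_i$ and $\mu_j$ earlier in this section, the argument amounts to reading off the dependence on $\mu_1^2-\mu_2^2$ from those formulas. I therefore pivot the whole chain through condition (iv).

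For (i)$\Leftrightarrow$(iv): by definition $\Lf$ is an isotropic-$\F_0$-manifold exactly when $\nf=0$, and on this manifold $\nf=-8(\mu_1^2-\mu_2^2)$, so $\nf=0$ holds if and only if $\mu_1^2=\mu_2^2$, i.e. $\mu_1=\pm\mu_2$. For (ii)$\Leftrightarrow$(iv): the relation $\tau^D=\tau+\frac32\nf$ valid on $\F_7$ (Section~\ref{sec:4}) gives $\tau^D=\tau$ precisely when $\nf=0$, so (ii) is equivalent to (i) and hence to (iv); this is exactly the content of the corollary for $\F_7$ recalled earlier. Alternatively one reads $\tau^D-\tau=-12(\mu_1^2-\mu_2^2)$ directly from the explicit values of $\tau$ and $\tau^D$ computed for the manifold.

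For (iii)$\Leftrightarrow$(iv): I extract the four vectors $\n_{E_i}\xi=\n_{E_i}E_5$ $(i=1,2,3,4)$ from \eqref{nabli} and compute their squared norms with the B-metric \eqref{g}. Each squared norm turns out to equal $\pm(\mu_1^2-\mu_2^2)$ (with a plus sign for $i=1,2$ and a minus sign for $i=3,4$, owing to the signature of $g$), so every $\n_{E_i}\xi$ is isotropic if and only if $\mu_1^2-\mu_2^2=0$. This is consistent with \eqref{snf7}, which expresses $\nf$ as $-2$ times the trace of precisely these norms, and closes the loop with (i) and (iv).

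I expect no serious obstacle here: the verifications are routine evaluations of already-recorded formulas. The only point requiring a little care is condition (iii), where isotropy of \emph{each} individual vector $\n_{E_i}\xi$ must be checked, rather than merely the vanishing of the traced expression in \eqref{snf7}. It is the indefinite signature of $g$ that forces each of the four squared norms to equal $\pm(\mu_1^2-\mu_2^2)$, so that their separate vanishing, their traced vanishing, and the condition $\nf=0$ all coincide on this particular manifold; without that signature coincidence one could not deduce individual isotropy from the trace alone.
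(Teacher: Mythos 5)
Your proposal is correct and follows essentially the same route as the paper, which proves this proposition implicitly by the explicit computations preceding it: the value $\nf=-8(\mu_1^2-\mu_2^2)$, the scalar curvatures $\tau$ and $\tau^D$ differing by $-12(\mu_1^2-\mu_2^2)=\frac{3}{2}\nf$, and the components $\n_{E_i}\xi$ from \eqref{nabli}, all funneled through the single condition $\mu_1^2=\mu_2^2$. Your explicit check that each individual $g(\n_{E_i}\xi,\n_{E_i}\xi)=\pm(\mu_1^2-\mu_2^2)$ (rather than relying only on the traced quantity) is exactly the care the indefinite signature requires, and it matches the paper's data.
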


According to \eqref{g} and \eqref{rho} we obtain
\begin{prop}
The $\F_7$-manifold $\Lf$, determined by \eqref{f}, \eqref{g} and
\eqref{com2}, is Einsteinian if and only if the following
conditions are valid
\[
\mu_1\mu_2
=-\left(\lm_1\lm_3+\lm_2\lm_4\right),\qquad 
\mu_1^2-\mu_2^2
=-\frac{1}{3}( \lm_1^2+\lm_2^2-\lm_3^2-\lm_4^2).
\]
\end{prop}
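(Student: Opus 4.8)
The plan is to use the defining property that a $5$-dimensional pseudo-Riemannian manifold is Einsteinian exactly when $\rho=\frac{\tau}{5}\,g$, and then to extract the resulting scalar equations by comparing components in the basis $\{E_i\}$. All the data needed are already available: the nonzero components of the Ricci tensor $\rho$ are listed in \eqref{rho} and those of $g$ in \eqref{g}, so the argument is purely a matter of matching the two side by side.

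First I would dispose of the off-diagonal part. Since $g$ is diagonal in $\{E_i\}$ by \eqref{g}, the Einstein equation forces every off-diagonal component of $\rho$ to vanish. According to \eqref{rho} the only nonzero off-diagonal components are $\rho_{13}=\rho_{24}=-4(\lm_1\lm_3+\lm_2\lm_4)-4\mu_1\mu_2$, so their vanishing is equivalent to the first stated condition $\mu_1\mu_2=-(\lm_1\lm_3+\lm_2\lm_4)$.

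Next I would treat the diagonal part. Writing $A=\lm_1^2+\lm_2^2-\lm_3^2-\lm_4^2$ and $C=\mu_1^2-\mu_2^2$, the entries in \eqref{rho} read $\rho_{11}=\rho_{22}=-\rho_{33}=-\rho_{44}=-2A-2C$ and $\rho_{55}=4C$, while $g_{11}=g_{22}=g_{55}=1$ and $g_{33}=g_{44}=-1$ by \eqref{g}. Respecting these signs, the diagonal equations $\rho_{ii}=\frac{\tau}{5}g_{ii}$ reduce to the two relations $-2A-2C=\frac{\tau}{5}$ and $4C=\frac{\tau}{5}$; eliminating $\frac{\tau}{5}$ between them yields $-2A-2C=4C$, i.e. $A=-3C$, which is exactly the second stated condition $\mu_1^2-\mu_2^2=-\frac{1}{3}(\lm_1^2+\lm_2^2-\lm_3^2-\lm_4^2)$. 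For sufficiency I would substitute both conditions back: they give $\tau=-8A-4C=20C$, hence $\frac{\tau}{5}=4C$, and a direct check confirms $\rho_{ii}=\frac{\tau}{5}g_{ii}$ for every $i$ together with the vanishing of all off-diagonal components, so $\rho=\frac{\tau}{5}g$ and the manifold is Einsteinian.

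There is no deep obstacle here; the only genuine care required—and the place where a sign slip would be easiest—is the indefinite signature, since the minus signs on $g_{33}$ and $g_{44}$ must be kept consistent both in forming the contraction $\tau=g^{ij}\rho_{ij}$ and in imposing $\rho_{ii}=\frac{\tau}{5}g_{ii}$. A subsidiary point worth verifying before concluding is that the list in \eqref{rho} is exhaustive, so that no off-diagonal component other than $\rho_{13}$ and $\rho_{24}$ could contribute an extra constraint beyond the two stated.
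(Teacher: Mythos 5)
Your proof is correct and follows exactly the route the paper intends: the paper offers no explicit argument beyond ``According to \eqref{g} and \eqref{rho} we obtain'', and your component-by-component comparison of $\rho$ with $\frac{\tau}{5}g$ (off-diagonal vanishing giving the first condition, the diagonal ratios $\rho_{11}/g_{11}=\rho_{55}/g_{55}$ giving the second) is precisely the computation being left implicit, with the signature signs handled correctly.
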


\section*{Acknowledgments}
The author wishes to thank Stefan Ivanov and Kostadin Gribachev
for several useful discussions about this work; Galia Nakova for
the comments about the construction of the commutators in
\eqref{com2}.

This work was partially supported by project NI11-FMI-004 of the
Scientific Research Fund at the University of Plovdiv.


\end{document}